\pgfplotsset{width=10cm,compat=1.9}
\newtheorem{theorem}{Theorem}[section]
\newtheorem{definition}{Definition}[section]
\newtheorem{example}{Example}[theorem]
\newtheorem{conjecture}{Conjecture}[theorem]
\newtheorem{proposition}{Proposition}[theorem]
\newtheorem{lemma}[theorem]{Lemma}
\newtheorem{note}[theorem]{Note}
\newcommand{\email}[1]{\ttfamily#1}
\begin{document}
\title{A Variational Approach to the Yamabe Problem: Conformal Transformations and Scalar Curvature on Compact Riemannian Manifolds}
\author{Aoran Chen\thanks{\email{chen5531@umn.edu}}}
\date{\vspace{-1em}\normalsize{University of Minnesota--Twin Cities \\ Department of Mathematics \\ Minneapolis, MN}}
\maketitle

\section{ Introduction and preliminaries}
The Yamabe problem asks if any Riemannian metric $g$ on a compact smooth manifold $M$ of dimension $n \geq 3$ is conformal to a metric with constant scalar curvature. 
This problem was born in 1960, from Hidehiko Yamabe's attempt to solve the then not yet proved Poincaré conjecture in his paper \cite{yamabe}.

\begin{conjecture}[Poincaré]
Every simply connected, closed 3-manifold is homeomorphic to the 3-sphere. Where closed means compact without boundary.
\end{conjecture}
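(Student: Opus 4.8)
The plan is to follow the Hamilton--Perelman program via the Ricci flow, since no purely topological attack is known to succeed in dimension three. First I would fix an arbitrary Riemannian metric $g_0$ on the given simply connected closed $3$-manifold $M$ and consider the Ricci flow $\partial_t g(t) = -2\,\mathrm{Ric}(g(t))$ with $g(0)=g_0$; short-time existence and uniqueness on a compact manifold is due to Hamilton (via DeTurck's trick). The optimistic hope would be Hamilton's theorem that a closed $3$-manifold admitting a metric of positive Ricci curvature is a spherical space form, so that simple connectivity forces $M \cong S^3$; but a generic $g_0$ need not flow into the positive-Ricci regime, and the flow will in general develop finite-time singularities, so the bulk of the work is understanding and surgically removing these singularities.

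The key analytic steps, all essentially due to Perelman, are: (i) introduce the $\mathcal{F}$-energy and $\mathcal{W}$-entropy functionals and prove their monotonicity along the flow, ruling out nontrivial steady and shrinking breathers; (ii) use the monotonicity of $\mathcal{W}$ (equivalently, of the reduced volume) to prove the \emph{no-local-collapsing theorem}, which yields a lower injectivity-radius bound wherever the curvature is controlled and hence permits taking smooth pointed limits of parabolic rescalings near a singularity; (iii) classify the possible blow-up limits as $\kappa$-noncollapsed ancient solutions with nonnegative curvature operator (the $\kappa$-solutions), and deduce the \emph{canonical neighborhood theorem}: every point of sufficiently large curvature lies in a region that, after rescaling, is close to a piece of a shrinking round cylinder $S^2 \times \mathbb{R}$, a round spherical space form, or a capped cylinder. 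I would then define \emph{Ricci flow with surgery}: at the first singular time, excise the high-curvature necks, glue in standard caps, and restart the flow, choosing the surgery parameters carefully so that the construction can be iterated.

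Next I would prove that on any finite time interval only finitely many surgeries occur and that the relevant geometric and curvature quantities stay under control, so the surgically modified flow is defined for all $t \ge 0$ or until the manifold disappears. The crucial finishing step for the Poincaré case is the \emph{finite-time extinction theorem} (Perelman, with an alternative argument of Colding--Minicozzi using the width/min-max of sweepouts): because $M$ is simply connected --- more generally, has no aspherical prime summands --- the surgically evolved manifold becomes empty after finite time. Finally I would carry out the topological bookkeeping: each surgery performs a connected-sum decomposition or discards an $S^2\times S^1$ or a spherical space form, so a manifold that disappears under flow-with-surgery must be built from these pieces; simple connectivity then kills every summand except $S^3$, whence $M$ is diffeomorphic, and in particular homeomorphic, to $S^3$.

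The main obstacle --- and the heart of Perelman's breakthrough --- is steps (ii)--(iii) together with making surgery actually work: establishing no-local-collapsing, extracting enough compactness to classify $\kappa$-solutions in dimension $3$ (in particular the universal neck structure and the pointwise derivative estimates on curvature), and then choosing surgery parameters uniformly enough that all a priori estimates survive infinitely many surgeries. Everything else is, by comparison, bookkeeping; it is this analytic core, powered by the entropy monotonicity and the non-collapsing it implies, where a genuinely new idea is indispensable.
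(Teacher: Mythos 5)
This statement is stated in the paper only as a conjecture for historical motivation; the paper never proves it. The closest the paper comes to an ``approach'' is the discussion surrounding Theorem \ref{Killing-Hopf}: try to realize a constant-sectional-curvature (equivalently, in dimension $3$, Einstein) metric as a critical point of the Hilbert--Einstein action, and then invoke Killing--Hopf together with simple connectivity to conclude $M\cong S^3$. The paper explicitly concedes that this approach falls short --- one cannot in general produce such an Einstein metric --- and uses that failure to motivate the weaker Yamabe problem, which is the actual subject of the paper. Your proposal takes the genuinely different route that actually succeeds: Hamilton's Ricci flow driven to a round metric not by a direct variational argument but by parabolic evolution, with Perelman's entropy monotonicity, no-local-collapsing, canonical neighborhoods, surgery, and finite-time extinction supplying what the static variational picture cannot. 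As a roadmap your outline is accurate and correctly locates the hard analytic core (non-collapsing, the classification of $\kappa$-solutions, and the uniform control of surgery parameters), and the final topological bookkeeping via prime decomposition and $\pi_1$ is right. Be aware, though, that what you have written is a faithful summary of a several-hundred-page program rather than a proof: every one of steps (i)--(iii), the surgery construction, and finite-time extinction is itself a major theorem whose proof you are deferring entirely to the literature, so in the context of this paper your proposal should be read as a citation of Perelman's work (as the paper itself does via \cite{milnor}) rather than as an argument that could be checked here.
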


In 2003 John Milnor wrote a nice survey \cite{milnor} right after Perelman proved the Poincaré conjecture. The Poincaré conjecture is a purely topological statement, but turned out that it was proved by using the Riemannian metric structure on smooth manifolds. \\

The classical result by Killing and Hopf shows how the metric structure can give topological information, and it provides a road to the Poincaré conjecture.

\begin{theorem}[Killing-Hopf]\label{Killing-Hopf}
The universal cover of a manifold of constant sectional curvature is one of the model spaces: 
\begin{enumerate}
\item sphere (positive sectional curvature)
\item plane (zero sectional curvature)
\item hyperbolic manifold (negative sectional curvature)
\end{enumerate}
\end{theorem}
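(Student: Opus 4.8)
\section*{Proof proposal}

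The plan is to reduce the theorem to the following uniqueness principle and then establish it: \emph{any complete, simply connected Riemannian $n$-manifold $N$ with constant sectional curvature $\kappa$ is isometric to the model space $M_\kappa^n$} — the round sphere of curvature $\kappa$ if $\kappa>0$, Euclidean $\mathbb{R}^n$ if $\kappa=0$, and hyperbolic space of curvature $\kappa$ if $\kappa<0$. Granting this, the theorem follows at once: pulling the metric back along the universal covering $\pi\colon \widetilde M\to M$ makes $\pi$ a local isometry, so $\widetilde M$ inherits constant sectional curvature $\kappa$; moreover $\widetilde M$ is simply connected by construction, and it is complete whenever $M$ is, since paths — in particular geodesics — lift along $\pi$. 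Thus the uniqueness principle identifies $\widetilde M$ with the appropriate $M_\kappa^n$. (Here I tacitly add the standard completeness hypothesis on $M$, without which the statement needs qualification.)

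For the uniqueness principle, fix $p\in N$, a point $p_0\in M_\kappa^n$, and a linear isometry $\iota\colon T_pN \to T_{p_0}M_\kappa^n$. The strategy is to transport this identification along geodesics via the exponential maps and show the resulting map is a Riemannian isometry; the engine is Jacobi-field comparison. Along any unit-speed geodesic in a space of constant curvature $\kappa$, the normal Jacobi equation reduces, in a parallel orthonormal frame, to $J'' + \kappa J = 0$, whose solutions are built from the single function $\mathrm{s}_\kappa(t)$ equal to $\sin(\sqrt\kappa\,t)/\sqrt\kappa$, to $t$, or to $\sinh(\sqrt{-\kappa}\,t)/\sqrt{-\kappa}$ according to the sign of $\kappa$. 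Consequently $\|d(\exp_q)_{tv}(w)\|$ depends only on $t$, on $\|w\|$, on the angle between $v$ and $w$, and on $\kappa$ — never on the ambient manifold. Hence at every point the differential of the map we construct is a linear isometry, so the map is a local isometry, and it is a genuine isometry as soon as it is also a diffeomorphism.

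In the cases $\kappa\le 0$ this is clean. By the Cartan–Hadamard theorem the maps $\exp_p\colon T_pN\to N$ and $\exp_{p_0}\colon T_{p_0}M_\kappa^n\to M_\kappa^n$ are both diffeomorphisms, so $f:=\exp_{p_0}\circ\,\iota\circ\exp_p^{-1}$ is a well-defined diffeomorphism, and the Jacobi comparison above shows $df$ is everywhere a linear isometry, hence $f$ is an isometry. The case $\kappa>0$ is the main obstacle: $\exp_p$ is no longer injective, so the naive global formula for $f$ is ill-defined. Here one instead builds the isometry locally — near $p$ the exponential maps are still diffeomorphisms, giving an isometry on a metric ball — and then extends it by analytic continuation along paths in $N$; the monodromy obstruction to this continuation being single-valued vanishes precisely because $N$ is simply connected. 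This is the content of the Cartan–Ambrose–Hicks theorem, which one may either invoke directly or reprove in this constant-curvature special case. Finally, one verifies directly that each of the three model spaces does have constant sectional curvature of the stated sign (positive for the sphere, zero for the plane, negative for hyperbolic space), which completes the trichotomy.
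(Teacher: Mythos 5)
The paper states Killing--Hopf as a classical background fact and gives no proof of its own, so there is nothing internal to compare against; judged on its own terms, your sketch follows the standard textbook route (uniqueness of simply connected space forms via Jacobi-field comparison, Cartan--Hadamard for $\kappa\le 0$, Cartan--Ambrose--Hicks or analytic continuation for $\kappa>0$) and is correct in outline. Your insertion of the completeness hypothesis is not merely ``tacit'' housekeeping but a genuine correction: without completeness the statement is false (a proper open disk in $\mathbb{R}^2$ is flat and simply connected, hence its own universal cover, yet is not isometric to the plane), so it is worth flagging that the theorem as stated in the paper needs this qualification.

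The one place where your argument is thinner than it should be is the $\kappa>0$ case. Saying that the monodromy obstruction ``vanishes because $N$ is simply connected'' gives you a well-defined local isometry $f\colon N\to M_\kappa^n$, but you still owe an argument that this globally continued map is a bijection rather than, say, a nontrivial covering or a non-surjective immersion. The standard repair is: a local isometry from a complete Riemannian manifold to a connected Riemannian manifold is a Riemannian covering map (geodesics lift, so the path-lifting property holds), and a covering of the simply connected target $S^n_\kappa$ by a connected space is a diffeomorphism. (Equivalently, one runs the construction in the opposite direction, from $M_\kappa^n$ to $N$, and uses that $N$ is simply connected to conclude the covering is trivial.) With that sentence added, and with the routine verification that the three model spaces have constant curvature of the stated signs, the proof is complete.
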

(See \ref{sectional curvature} for the definition of sectional curvature.)\\

From the above theorem we understand that it suffices to prove the Poincaré conjecture if, over on any simply connected closed 3-manifold, we can build a Riemannian metric with constant sectional curvature. It is a fundamental result that any smooth manifold has a Riemannian metric (see Definition 2.1 in the book \cite{CarmoR}), but the question is whether such a Riemannian metric can have constant sectional curvature? Or, for an arbitrary Riemannian metric, are we able to deform this metric such that it has constant sectional curvature? \\

An approach to finding such a constant curvature metric is to identify the critical point of the Hilbert-Einstein action.

\begin{definition}[Hilbert-Einstein action]\label{Hilbert-Einstein action 1} For a closed Riemannian manifold, the action is given as
\begin{align*}
    \int_M RdV_g,  
\end{align*}

where $R$ and $dV_g$ stand, respectively, for the scalar curvature (see \ref{ricci and scalar} for the definition of scalar curvature) of $M$ and the volume form determined by the metric and orientation. 
\end{definition}
The survey \cite{logunov2004were} provided a historical review on the finding of the Hilbert-Einstein equations. Following the idea of the least action principle (Hilbert's Axiom I), Hilbert found the critical point of the action, which gives us the well-known gravitational field equation.\\


If $M,g$ is the manifold constrained to metrics of volume one, the critical points of Hilbert-Einstein action must satisfy:
\begin{align}
    R_{\mu \nu} = kg_{\mu \nu}, \label{Einstein condition}
\end{align}
where $g_{\mu \nu}$ is the metric tensor, $R_{\mu \nu}$ is the Ricci curvature (see \ref{ricci and scalar} for the definition of the Ricci curvature) of $g_{\mu \nu},$ and $k$ is the constant of proportionality. 

\begin{definition}[Einstein Manifold and Einstein metric]
For any Riemannian manifold whose Ricci tensor is proportional to the metric, it is an Einstein manifold; the metrics that satisfy (\ref{Einstein condition}) are Einstein metrics.
\end{definition}
In Chapter 1.1 of the book \cite{besse2007einstein}, Besse introduced an important topological property of the Einstein Manifold:
\begin{proposition}
A closed 3-dimensional (pseudo) Riemannian manifold is Einstein iff it has constant sectional curvature.
\end{proposition}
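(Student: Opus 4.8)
The plan is to prove the two implications separately: one is a one-line trace identity valid in every dimension and signature, while the content lies in the converse, which rests on two ingredients, the first of them genuinely special to dimension three.

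\textbf{Constant sectional curvature $\Rightarrow$ Einstein.} Suppose $(M^{3},g)$ has constant sectional curvature $\kappa$. For a (pseudo-)Riemannian metric this means the Riemann tensor takes the model form
\[
R_{ijkl}=\kappa\,(g_{ik}g_{jl}-g_{il}g_{jk}),
\]
which is really the only sensible reading of ``constant sectional curvature'' when the metric is indefinite, the sectional-curvature quotient being defined only on non-degenerate $2$-planes; I would note (following, e.g., O'Neill) that pointwise constancy of the sectional curvature on non-degenerate planes is equivalent to this algebraic identity. Contracting with $g^{ik}$ gives $R_{jl}=(n-1)\kappa\,g_{jl}$, which is the Einstein condition (\ref{Einstein condition}). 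Nothing about $n$ or the signature is used here.

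\textbf{Einstein $\Rightarrow$ constant sectional curvature.} Assume $R_{ij}=k\,g_{ij}$; the trace gives $k=R/3$. The crux is that in dimension three the full Riemann tensor is an algebraic function of the Ricci tensor. Indeed, the space of algebraic curvature tensors over a $3$-dimensional vector space has dimension $\tfrac{n^{2}(n^{2}-1)}{12}=6$, equal to the dimension $\tfrac{n(n+1)}{2}=6$ of symmetric $2$-tensors, and the Ricci contraction maps onto all symmetric $2$-tensors (one checks this using Kulkarni--Nomizu products with $g$, which is where $n\geq 3$ enters), hence is an isomorphism; equivalently the Weyl conformal tensor vanishes identically for $n=3$. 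Consequently the Riemann tensor equals its ``Schouten part'',
\begin{align*}
R_{ijkl}={}&\frac{1}{n-2}\,(R_{ik}g_{jl}-R_{il}g_{jk}+R_{jl}g_{ik}-R_{jk}g_{il})\\
&{}-\frac{R}{(n-1)(n-2)}\,(g_{ik}g_{jl}-g_{il}g_{jk}),
\end{align*}
and for $n=3$ with $R_{ij}=\tfrac{R}{3}g_{ij}$ the first group contributes $\tfrac{2R}{3}(g_{ik}g_{jl}-g_{il}g_{jk})$ and the second $-\tfrac{R}{2}(g_{ik}g_{jl}-g_{il}g_{jk})$, so
\[
R_{ijkl}=\frac{R}{6}\,(g_{ik}g_{jl}-g_{il}g_{jk}),
\]
i.e. at each point $p$ the metric has the constant-curvature model form with value $\kappa(p)=R(p)/6$.

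\textbf{Pointwise to global, and the main obstacle.} It remains to pass from ``pointwise constant'' to ``constant'', which is Schur's lemma: from the contracted second Bianchi identity $\nabla^{i}R_{ij}=\tfrac12\nabla_{j}R$, inserting $R_{ij}=\tfrac{R}{3}g_{ij}$ and $\nabla g=0$ gives $\tfrac13\nabla_{j}R=\tfrac12\nabla_{j}R$, hence $\nabla R\equiv 0$; on a connected manifold (a closed manifold being a finite disjoint union of connected closed ones) $R$, and therefore $\kappa$, is constant, and Theorem \ref{Killing-Hopf} then identifies the universal cover. Only non-degeneracy of $g$ is used to raise the index, so this is insensitive to signature. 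I expect the load-bearing step to be the dimension-three algebraic identity --- equivalently, $W\equiv 0$ for $n=3$ --- together with keeping the convention-dependent coefficients straight so that, once the Einstein condition is imposed, they genuinely collapse to the single factor $R/6$; in dimension $\geq 4$ this collapse fails and Einstein no longer forces constant curvature. Given that identity the Schur step is routine, and the only remaining subtlety is the pseudo-Riemannian reading of ``constant sectional curvature'' noted above.
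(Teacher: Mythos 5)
Your proof is correct, and it is worth noting that the paper itself never proves this proposition --- it only cites Chapter 1.1 of Besse --- so your argument actually supplies something the text omits. The route you take is the standard one: the easy contraction for one direction, and for the converse the fact that in dimension three the Weyl tensor vanishes identically, so the Schouten decomposition expresses $R_{ijkl}$ algebraically in terms of $R_{ij}$; your coefficient bookkeeping checks out ($\tfrac{2R}{3}-\tfrac{R}{2}=\tfrac{R}{6}$, consistent with $R=n(n-1)\kappa=6\kappa$), and the Schur step via the contracted Bianchi identity correctly upgrades pointwise to global constancy. The closest the paper comes to this computation is inside its proof of Proposition \ref{OBATA}, where it likewise uses $W=0$ together with a vanishing trace-free Ricci part to conclude the curvature tensor has the constant-curvature form --- so your decomposition is the same tool the paper deploys elsewhere, just applied directly to the Einstein hypothesis rather than to a conformal-factor identity. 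Two small remarks: your parenthetical that a closed manifold is a disjoint union of connected ones does not quite rescue the disconnected case (distinct components could carry distinct constants on both sides of the equivalence, so one should simply assume connectedness, as is implicit throughout the paper); and the appeal to Theorem \ref{Killing-Hopf} at the end is unnecessary for the proposition itself and is stated in the paper only for Riemannian, not pseudo-Riemannian, metrics, so it is best dropped from this particular argument. Neither affects the validity of the proof.
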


From the above proposition and Killing-Hopf (see Theorem \ref{Killing-Hopf}), we find that a 3-dimensional closed manifold that has constant sectional curvature is equivalent to a sphere $S^n.$ So it suffices to prove the Poincaré conjecture if, for a 3-manifold, we can find critical points of the Hilbert-Einstein action that gives us constant sectional curvature. 

This approach falls short because it is difficult to prove the existence of Einstein metrics that give us constant sectional curvature on any closed manifold. However, it is much easier to have a scalar curvature than having a constant sectional curvature. See the following example of a 3-manifold that does not have Einstein metric but has constant scalar curvature.

\begin{example}[$M = S^2\times S^1$]
The topological property of Einstein metrics suggest that, if a 3-manifold has an Einstein metric, then it has constant scalar curvature, its universal cover is diffeomorphic to $S^3$ or $\mathbb{R}^3$ (both Euclidean space and hyperbolic space are diffeomorphic to $\mathbb{R}^3$). Now consider $M = S^2\times S^1$ with the universal cover $S^2\times \mathbb{R}$. Since it is not homeomorphic to $S^3$ or $\mathbb{R}^3$, hence $M$ does not have an Einstein metric. 
 However, the scalar curvature of the product manifold is the sum of the scalar curvature of these two manifolds (see Proposition \ref{scalar addition}), so $M$ has constant scalar curvature if $S^2$ and $S^1$ are equipped with the standard unit sphere metric. 
\end{example}
 Therefore, it makes sense for Yamabe to consider another related problem in a more restricted area:

\paragraph{The Yamabe Problem.}
\textit{Given a compact Riemannian manifold $(M, g)$ of
dimension $n \geq 3$, is there a metric $\tilde{g}$ conformal to $g$ that has a constant scalar curvature?}\\

\textit{This survey serves as the capstone paper of my senior year under the guidance of Professor Engelstein. It follows that given in the survey \cite{Neumayer}, and the survey \cite{Lee}; for the Riemannian geometry part it mostly refers to the book \cite{CarmoR}; for the analysis part it mostly refers to the book \cite{evans}, and for the part of concentration compactness we follow the approach of Lions \cite{lions1984concentration1,lions1984concentration2}.}

\textit{We will start by taking the analytical approach to discuss how the minimizer of Yamabe functional provides constant scalar curvature, and its relationship with the Sobolev Space $W^{1,2}.$ Then with stereographic projection and dilation, we will show the importance of the sphere $S^n$, and the fact that the minimizer of Yamabe functional on standard sphere is the standard metric and its conformal diffeomorphisms. This will give us the constraint $\lambda (M) < \lambda(S^n),$ which leads us to the final theorem that the Yamabe problem is solvable when $\lambda(M) < \lambda(S^n).$ For the proof of this theorem we follow the approach of concentration compactness.}


\section{The Analytical approach of the Yamabe Problem}
Given two conformal metrics $g$ and $\tilde{g} = \phi^{4/(n-2)}g$ (for choice of index, see Appendix \ref{conformal index}). Let $R$ and $\tilde{R}$ denote the scalar curvatures of $g$ and $\tilde{g}$, respectively. These quantities are related by the identity
\begin{align}
    \tilde{R} = \phi^{1-2^*}(-c_n\Delta\phi + R\phi)\label{conformal identity}.
\end{align}\\
Recall the Hilbert-Einstein action (see Definition \ref{Hilbert-Einstein action 1}), that is, $\int_M RdV_g $. Here we consider a normalized version
\begin{align*}
Q(g) = \frac{\int_M RdVol_g}{vol_g(M)^{2/2^*}} \tag{$\star$}, \label{yamabe functional}
\end{align*}
in which $2^*=2n/(n-2)$ is chosen such that $Q(g) = Q(kg)$ for any $k>0$.

Notice that $2n/(n-2)$ is the exponent in the strict Sobolev embedding (see Appendix \ref{Sobolev Space} for the fundamental knowledge of the Sobolev space), which will play a decisive role in the latter part.\\ \\
Therefore, we define the Yamabe constant of $(M,g)$ by
\begin{align*}
\lambda(M) = \inf \left\{\frac{\int_M \tilde{R}dVol_{\tilde{g}}}{vol_{\tilde{g}}(M)^{2/2^*}}: \tilde{g}\text{ conformal to } g \right\}
\end{align*} \\
Now take the variation. For any variation $\phi\in C^{\infty}_c(M)$, we have
\begin{align*}
0 &= \frac{d}{d\epsilon}\rvert_{\epsilon=0}Q(u+\epsilon\phi) \\
   &= \frac{d}{d\epsilon}|_{\epsilon=0}\frac{\int_Mc_n\left \langle\nabla u+\epsilon \nabla \phi, \nabla u+\epsilon \nabla \phi\right \rangle + R(u+\epsilon \phi)^2dVol_g}{|u+\epsilon\phi|^2_{2^*}}\\
   &= \frac{\int_M2c_n\left \langle\nabla u, \nabla \phi\right \rangle + 2Ru\phi dVol_g}{|u|^2_{2^*}} \\
   &\ \ - 2\frac{\int_Mc_n\left \langle\nabla u, \nabla u\right \rangle + Ru^2\phi dVol_g}{|u|^2_{2^*}}\cdot\frac{\int_Mu^{2^*-1}\phi dVol_g}{|u|^2_{2^*}}\\
   &= \frac{2}{|u|^2_{2^*}}\int\phi\left(  -c_n\Delta u+Ru - \frac{u^{2^*-1}}{|u|_{2^*}^{2^*-2}}Q(u)  \right)dVol_g
\end{align*}\\
Notice that $Q(u)$ and $|u|_{2^*}$ are numbers, let $\lambda = \frac{Q(u)}{|u|_{2^*}^{2^*-2}}$ and we have $-c_n\Delta u+Ru - \lambda u^{2^*-1}=0$. If u is a minimizer, from (\ref{conformal identity}) we see that $\tilde{R} = u^{1-2^*}(-c_n\Delta u+Ru) = \lambda$ is constant.\\

To summarize thus far, to solve the Yamabe problem, it suffices to show the existence of a smooth positive minimizer of $\lambda(M)$. However, we will see in the later part that establishing such a minimizer is difficult because the problem lacks compactness.

\subsection{Why do we consider Sobolev space \texorpdfstring{$W^{1,2}$}{TEXT}}
To find a minimizer of Yamabe functional, for a minimizing sequence $\{g_i\}_{i=1}^\infty$ that satisfies $\lim\limits_{i\to\infty}Q(g_i) = \lambda (M)$, the goal is to show that $g_i$ converges to a smooth metric $\tilde{g}$, which is the minimizer of the Yamabe functional.\\

Then naturally 2 questions arise:\\

1. How do we show the convergence? \\

2. What are the properties of the limit metric?\\ \\
Whenever considering convergence of functions, we first need to choose a topology of convergence. In this case, we choose a norm on the function space. As we solve differential equations, it is natural to consider the Sobolev space $W^{k,p}$. (see \ref{Sobolev Space definition} about the definition of Sobolev space.) The question is why are we looking at space $W^{1,2}$?

Basically, this is asking that, given the basic condition of the problem, what is the largest space that $Q(u)$ could live in and $\nabla u$ must be square integrable for us to start talking about $Q(u)$.

Consider the minimizer of the normalized Hilbert-Einstein action. Its numerator is $\int_M -u\Delta u + Ru^2 dVol_g = \int_M \left \langle\nabla u, \nabla u\right \rangle + Ru^2 dVol_g$, and its denominator is $|u|_{2^*}^2$. It is natural to let the normalizing term $|u|_{2^*}^2 = 1$, and since $M$ is compact, we have $|u|_{2^*}\geq C|u|_{2}$ for some constant $C$. Therefore, $|u|_{2}$ is bounded. It follows that $|\int_M Ru^2 dVol_g| \leq sup\{|R|\}|u|_{2}^2$ is bounded. Also, since $\frac{\int_M \left \langle\nabla u, \nabla u\right \rangle + Ru^2 dVol_g}{|u|_{2^*}^2}$ is bounded, we see that $\int_M \left \langle\nabla u, \nabla u\right \rangle dVol_g$ is bounded. \\

In summary, we have $|u|_{2^*}$, $|u|_{2}$, $\int_M \left \langle\nabla u, \nabla u\right \rangle dVol_g$ all bounded. So $u$ is bounded in both $W^{1,2}(M)$ and $L^{2^*}(M)$. The Sobolev inequality (see \ref{Gagliardo-Nirenberg-Sobolev inequality} about the Sobolev inequality) tells us that $W^{1,2}(M)$ is a more restrictive condition, hence it is natural to consider $u\in W^{1,2}(M)$.

\subsection{Lack of compactness of the embedding \texorpdfstring{$W^{1,2}(M)$ into $L^{2^*}(M)$}{TEXT}}
The Yamabe functional consists two parts: the numerator $\int_M u\Delta u + Ru^2 dVol_g$, which is close to $W^{1,2}$ norm (see \ref{Sobolev norm} about Sobolev norm), and the denominator $|u|_{2^*}$. So when the Yamabe functional is bounded and approaching a value $\lambda(M)$, we can expect our function to weakly converge in the $W^{1,2}$ space, and by the Sobolev embedding theorem (see \ref{Sobolev embedding}), this also means that it will weakly converge in the $L^{2^*}(M)$ space.

Therefore we cannot just apply the direct method of the calculus of variations, since the embedding of $W^{1,2}(M)$ into $L^{2^*}$ is not compact (for compactness theorem see \ref{Rellich--Kondrachov Compactness Theorem}), and we could have lower semicontinuity of the energy only if there is strong convergence of the minimizing sequence in $L^{2^*}(M).$ \\

The following example shows that the direct method can be used to establish the existence of minimizers in a particular case of $\lambda(M) \leq 0$.
\begin{example}[Consider $\lambda(M) \leq 0$] \label{Special case of Yamabe}
\begin{align*}
E(u) = \int c_n|\nabla u|^2 + Ru^2dVol_g - \lambda(M)|u|^2_{2^*}
\end{align*}
Since $|u|^2_{2^*} \geq 0$, we have that $E(u)\geq 0$, and $E(u)=0$ iff $u$ is minimizer of the $Q(u)$.\\
Since $M$ is compact, we have the Sobolev embedding (see \ref{Sobolev embedding}):
$$W^{1,2}(M)\subset L^{2^*}(M)\subset L^2(M)$$
The first embedding is not compact. Hence if $\{u_k\}$ is a sequence with $E(u_k)$ converge to $0$, then by the Sobolev inequality, we see that $E(u_k)$ bounded implies $|u_k|_{W^{1,2}}$ bounded (we can constrain on $|u_k|_{2^*} = 1$). So there is a subsequence such that $\{u_k\}$ weakly converge in $W^{1,2}(M)$ and $L^{2^*}(M)$, and converge strongly in $L^2(M)$. Since $\lambda (M) \leq 0$, we have that
\begin{align*}
E(u) = E(\lim\limits_k u_k) \leq \lim\limits_k E( u_k) = 0.
\end{align*}
Note that if $\lambda(M) > 0$ then the above inequality may not hold, since the lower semicontinuity of the $2^*$ norm goes in the wrong direction. Also, the smoothness should relies on the regularity property of Laplacian operator.
\end{example}

From the above example we see that the lower semicontinuity in $L^{2^*}$ does not help since it is in the denominator. Another problem is that the weak limit $u$ may be equal to zero. However, we could have lower semicontinuity of energy $E(u)$ if the minimizing sequence converges strongly in $L^{2^*}(M).$  For example, if we consider the subcritical power:

\begin{example}[replace $2^*$ by any value $p < 2^*$]
By Rellich-Kondrachov theorem (see \ref{Rellich--Kondrachov Compactness Theorem}) the embedding $W^{1,2}(M) \hookrightarrow L^{2^*}(M)$ is compact, then with direct method there exist the minimizer
$$
Q_p(\phi) = \frac{\int_M (c_n\left \langle\nabla\phi,\nabla\phi\right \rangle + R\phi^2)dVol_g}{|\phi|_{p}^2}.
$$
\end{example} 

Historically, Trudinger gave this restrictive assumption in the paper \cite{Trudinger}, that the Yamabe problem could be solved whenever $\lambda(M) \leq 0.$ In fact, going one step further, he showed the existence of a positive constant $\alpha (M)$ such that the problem could be solved when $\lambda (M) < \alpha (M).$ Based on Trudinger's result, Aubin showed in the paper \cite{aubin1976equations} that $\alpha (M) = \lambda (S^n)$ for every $M.$ This established the following theorem:

\begin{theorem}[Yamabe, Trudinger, Aubin]\label{Yamabe, Trudinger, Aubin}
 Suppose $\lambda{(M)}<\lambda(S^n)$. Then there exists a minimizer of $\lambda{(M)}$ and hence a solution of the Yamabe problem on M.
\end{theorem}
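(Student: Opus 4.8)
The plan is to establish the existence of a minimizer for the Yamabe functional under the strict inequality hypothesis $\lambda(M) < \lambda(S^n)$ by a careful analysis of a minimizing sequence, using the strict inequality precisely to rule out the loss of compactness in the critical Sobolev embedding $W^{1,2}(M) \hookrightarrow L^{2^*}(M)$. First I would fix a minimizing sequence $\{u_i\} \subset W^{1,2}(M)$ normalized by $|u_i|_{2^*} = 1$ with $Q(u_i) = c_n\int_M |\nabla u_i|^2 + R u_i^2 \, dVol_g \to \lambda(M)$; as already observed in the excerpt, boundedness of the Yamabe functional together with compactness of $M$ forces $\{u_i\}$ to be bounded in $W^{1,2}(M)$. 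Passing to a subsequence, I obtain a weak limit $u \in W^{1,2}(M)$ with $u_i \rightharpoonup u$ in $W^{1,2}$, $u_i \to u$ strongly in $L^2(M)$ (by Rellich--Kondrachov), and $u_i \to u$ a.e. The central difficulty is that the embedding into $L^{2^*}$ is \emph{not} compact, so a priori we only know $|u|_{2^*} \le 1$, and if $|u|_{2^*} < 1$ — in particular if $u \equiv 0$ — the naive argument collapses. The whole point of the proof is that $\lambda(M) < \lambda(S^n)$ excludes this.

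The key mechanism is a concentration-compactness dichotomy, in the spirit of Lions \cite{lions1984concentration1,lions1984concentration2}. I would apply the concentration-compactness lemma to the sequence of measures $\mu_i = (c_n|\nabla u_i|^2 + R u_i^2)\,dVol_g$ and $\nu_i = |u_i|^{2^*}\,dVol_g$ (or, cleaner, work with $|\nabla u_i|^2\,dVol_g$ directly and absorb the lower-order $Ru_i^2$ term, which converges strongly), passing to weak-$*$ limits $\mu$ and $\nu$ as measures on the compact manifold $M$. The Brezis--Lieb lemma gives the decomposition $1 = |u|_{2^*}^{2^*} + \sum_{j} \nu_j$ where $\nu_j$ are the atoms of the defect measure $\nu - |u|^{2^*}dVol_g$, and the (scale-invariant, locally Euclidean) Sobolev inequality — whose sharp constant on $M$ is governed by $\lambda(S^n)$ — forces each atom to carry at least a fixed quantum of energy: concretely $\mu(\{x_j\}) \ge \lambda(S^n)\,\nu_j^{2/2^*}$. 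Summing, one gets $\lambda(M) \ge Q(u) \cdot |u|_{2^*}^2 + \lambda(S^n)\sum_j \nu_j^{2/2^*}$ after lower-semicontinuity of the gradient term on the non-atomic part, and since $Q(u)|u|_{2^*}^2 \ge \lambda(M)|u|_{2^*}^{2}$ (by definition of the infimum, assuming $u \not\equiv 0$; the case $u \equiv 0$ is handled separately and is where concentration is total), a convexity/subadditivity comparison between $t \mapsto \lambda(M) t$ and $t \mapsto \lambda(S^n) t^{2/2^*}$ together with $\lambda(M) < \lambda(S^n)$ and $\sum(\text{masses}) = 1$ shows that no mass can escape into atoms: we must have $|u|_{2^*} = 1$ and no concentration.

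Once concentration is ruled out, strong convergence $u_i \to u$ in $L^{2^*}(M)$ follows, hence $|u|_{2^*} = 1$, and by weak lower semicontinuity of the Dirichlet energy (together with strong $L^2$-convergence controlling $\int R u_i^2$) we get $Q(u) \le \liminf Q(u_i) = \lambda(M)$, so $u$ is a minimizer. I would then finish by the standard bootstrap: $u \ge 0$ may be assumed (replace $u$ by $|u|$, which does not increase the Dirichlet energy), $u$ satisfies the Euler--Lagrange equation $-c_n \Delta u + R u = \lambda(M)\, u^{2^*-1}$ weakly, elliptic regularity for the Laplacian promotes $u$ to a smooth function, and the strong maximum principle gives $u > 0$ everywhere; then $\tilde g = u^{4/(n-2)} g$ is a smooth metric conformal to $g$ with constant scalar curvature $\lambda(M)$, by the conformal transformation law \eqref{conformal identity}. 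The main obstacle throughout is the concentration-compactness step — specifically, correctly localizing the sharp Sobolev inequality on $M$ so that the energy lower bound at each concentration point involves exactly the constant $\lambda(S^n)$, since it is this precise constant, and the strict inequality $\lambda(M) < \lambda(S^n)$, that makes the dichotomy degenerate in our favor; the regularity and positivity steps, while essential, are comparatively routine given the tools cited in the excerpt.
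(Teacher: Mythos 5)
Your proposal is correct and follows essentially the same route as the paper: a normalized minimizing sequence, the Lions concentration--compactness lemma to decompose the weak-$*$ limit of $|u_k|^{2^*}d\mathrm{vol}_g$ into the weak limit plus atoms each costing energy at least $\lambda(S^n)\nu_j^{2/2^*}$, the subadditivity comparison of $t^{2/2^*}\lambda(M)+(1-t)^{2/2^*}\lambda(S^n)$ against $\lambda(M)$ under the strict hypothesis $\lambda(M)<\lambda(S^n)$ to force $t=1$, and then the standard regularity and maximum-principle finish. The only cosmetic difference is that you invoke Brezis--Lieb by name for the mass decomposition, which the paper instead derives inside its proof of the Lions lemma.
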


\textit{This is one of the three main theorems of the Yamabe problem and serves as the main theorem of this survey. For the other two main theorems, see the survey \cite{Lee} on the proofs of $\lambda{(M)}<\lambda(S^n)$ in dimensions $3,4,5$ and higher given, respectively, by Schoen \cite{schoen1984conformal} and Aubin \cite{aubin1976equations}.}

\section{The Yamabe Problem on the Sphere}
From Theorem \ref{Yamabe, Trudinger, Aubin} we understand that the model case of the sphere $S^n$ plays an important role in the proofing of the Yamabe problem. We start this section by discussing a natural question, that is why do we consider the sphere $S^n$?

\subsection{Stereographic projection}
Let $P=(0, \ldots, 1)$ be the north pole on $S^n \subseteq \mathbb{R}^{n+1}$. Stereographic projection $\sigma: S^n\setminus P \rightarrow$ $\mathbb{R}^{n}$ is defined by $\sigma\left(\zeta^{1}, \ldots, \zeta^{n}, \xi\right)=\left(x^{1}, \ldots, x^{n}\right)$ for $(\zeta, \xi) \in S^n\setminus P$ where
$$
x^{j}=\frac{\zeta^{j}}{1-\xi}
$$
We can verify that $\sigma$ is a conformal diffeomorphism. If $\bar{g}$ is the standard metric on $S^n$, and $g_{euc}$ is the Euclidean metric on $\mathbb{R}^{n}$, then under $\sigma,$ the round metric on sphere $\bar{g}$ corresponds to
$$
(\sigma^{-1})^* \bar{g}= \frac{g_{euc}}{4\left(|x|^{2}+1\right)^2}
$$
This can be written as 
\begin{align}
4 u_{1}^{4/(n-2)} g_{euc} \text{,\ \ \ \  where   }
u_{1}(x)=\left(|x|^{2}+1\right)^{(2-n) / 2}.  \label{projection}
\end{align}\\
We denote this by $4 u_{1}^{p-2} g_{euc}$ in the latter part, where $p=2^*=2n/(n-2)$. By means of stereographic projection, it gives the conformal diffeomorphisms of the sphere induced by the standard conformal transformations on the plane, as shown in the diagram 
\[
\begin{tikzcd}
S^n \arrow[r, "\sigma"] \arrow[d, "\sigma^{-1} \delta_{\alpha} \sigma"]
& \mathbb{R}^n \arrow[d, "\delta_{\alpha}"] \\
S^n 
& \mathbb{R}^n \arrow[l,"\sigma^{-1}"]
\end{tikzcd}
\]
The group of such diffeomorphisms is generated by the rotations, together with maps of the form $\sigma^{-1} \delta_{\alpha} \sigma$ where $\delta_{\alpha}: \mathbb{R}^{n} \rightarrow \mathbb{R}^{n}$ is the dilation $\delta_{\alpha}(x)=\alpha^{-1} x$ for $\alpha\rangle 0.$ \\ \\
Combine with (\ref{projection}), we get the spherical metric on $\mathbb{R}^{n}$ transforms under dilations to
\begin{align}\label{transform under dilation}
    \delta_{\alpha}^{*} (\sigma^{-1})^{*} \bar{g}=4 u_{\alpha}^{p-2} g_{euc} \text{,\ \ \ \  where   } u_{\alpha}=\left(\frac{|x|^{2}+\alpha^{2}}{\alpha}\right)^{(2-n)/2}.
\end{align}
\\ 
For standard metric on $S^n$ we have 
\begin{align*}
    Q(u)=\lambda(S^n),
\end{align*}
where $u=1.$ Consider the dilation $\delta_{\alpha}: \mathbb{R}^{n} \rightarrow \mathbb{R}^{n}$. Since it's conformal transformation we get that
\begin{align*}
Q(u_{\alpha})=\lambda(S^n),
\end{align*}
where $u_{\alpha}$ is given by (\ref{transform under dilation}).
Notice $2-n \leq 0$, then we rewrite as $u_{\alpha}=\left(\frac{\alpha}{|x|^{2}+\alpha^{2}}\right)^{2/(n-2)}.$

Considering the two cases of whether $x$ is at the south pole, let $\alpha \rightarrow 0,$ we get that,
\begin{align*}
    u_{\alpha}&=\left(\frac{\alpha}{|x|^{2}+\alpha^{2}}\right)^{2/(n-2)} \xrightarrow{x \neq 0} 0\\
    u_{\alpha}&=\left(\frac{1}{\alpha}\right)^{2/(n-2)} \xrightarrow{x = 0} +\infty
\end{align*}
We see that for $x \neq 0, $ $u_{\alpha}$ converges weakly to 0, so all $u_{\alpha}$ concentrate near the South pole. Now consider this metric on the neighbourhood of any manifold $M$, we have
\begin{align*}
    Q_M(u_{\alpha}) &\approx Q_{S^n}(u_{\alpha})\\
    \\ &\Downarrow \\ 
    \lim_{\alpha \rightarrow 0} Q_M(u_{\alpha}) &= \lim_{\alpha \rightarrow 0}Q_{S^n}(u_{\alpha})\\
    \\ &\Downarrow \\ 
    \inf Q_M(u_{\alpha}) &\leq \lambda (S^n)\\
    \\ &\Downarrow \\ 
    \lambda(M) &<\lambda (S^n)
\end{align*}
Hence we obtain the restriction in Theorem \ref{Yamabe, Trudinger, Aubin}.
\subsection{Two important results on the sphere \texorpdfstring{$S^n$}{TEXT}}
After understanding the importance of the sphere $S^n$, we now show that the infimum of the Yamabe functional is attained by the standard metric $\bar{g}$ on the sphere $S^n.$ 

This was originally independently proved by Aubin \cite{aubin1976problemes} and G.Talenti \cite{talenti1976best}. Here, we will follow the approach by Morio Obata (\cite{obata1971conjectures}) and Karen Uhlenbeck (\cite{Sacks1981TheEO}). It consists of two parts:

\begin{enumerate}
    \item The metric $g$ is the standard metric, and it's conformal diffeomorphism. (Proposition \ref{OBATA}.)
    \item The infimum $\lambda(S^n)$ is attained by a smooth
metric $g$ in the conformal class of the standard metric $\bar{g}.$  (Proposition \ref{Ulenbeck})
\end{enumerate}
The first part is given by Obata in the survey \cite{obata1971conjectures} about the conformal diffeomorphism on the sphere.
\begin{proposition}[Obata]\label{OBATA}
If $g$ is a metric on $S^n$ that is conformal to the standard round metric $\bar{g} = \phi^{-2}g$ and has a constant scalar curvature, then up to a constant scale factor, $g$ is obtained from $\bar{g}$ by conformal diffeomorphism of the sphere.
\end{proposition}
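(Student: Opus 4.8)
The plan is to exploit the classical Obata trick: show that the conformal factor $\phi$ satisfies a rigidity equation forcing the Hessian of an associated function to be pure trace, which on a space form integrates to give a gradient conformal Killing field, hence a conformal diffeomorphism. First I would set up the conformal change carefully. Writing $\bar g = \phi^{-2} g$ (equivalently $g = \phi^{2}\bar g$, so $\phi$ is a smooth positive function on $S^n$), I would record how the scalar curvatures transform under this change—this is just equation (\ref{conformal identity}) rewritten in the $\phi^{-2}$ normalization—together with the transformation law for the traceless Ricci tensor $\mathring{\mathrm{Ric}}$. The key classical identity I would invoke is that under $g = \phi^{2}\bar g$ the traceless Ricci tensors are related by
\begin{align*}
\mathring{\mathrm{Ric}}_{\bar g} = \mathring{\mathrm{Ric}}_{g} + (n-2)\,\phi^{-1}\Bigl(\nabla^2_{\bar g}\phi - \tfrac{1}{n}(\Delta_{\bar g}\phi)\,\bar g\Bigr),
\end{align*}
so the traceless Hessian of $\phi$ with respect to $\bar g$ is controlled by the traceless Ricci curvatures of the two metrics.

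Next I would use the two hypotheses. Since $\bar g$ is the round metric, it is Einstein, so $\mathring{\mathrm{Ric}}_{\bar g} = 0$. Since $g$ has constant scalar curvature, I would apply the contracted second Bianchi identity to $g$ to deduce that $\mathring{\mathrm{Ric}}_g$ is divergence-free. The strategy is then to form the integral quantity $\int_{S^n} \phi\,\bigl|\mathring{\mathrm{Ric}}_g\bigr|^2_g\, dV_g$ (or an equivalent weighted norm), integrate by parts using the divergence-free property and the Hessian identity above, and show the integrand is a perfect square plus a non-negative term, forcing $\mathring{\mathrm{Ric}}_g \equiv 0$ and $\nabla^2_{\bar g}\phi = \tfrac{1}{n}(\Delta_{\bar g}\phi)\,\bar g$. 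Thus $\phi$ solves the Obata equation $\nabla^2_{\bar g}\phi = \tfrac{1}{n}(\Delta_{\bar g}\phi)\,\bar g$ on the round sphere.

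Finally I would analyze solutions of the Obata equation on $(S^n,\bar g)$. Tracing and using the Lichnerowicz eigenvalue bound, $\Delta_{\bar g}\phi = -\lambda_1 \phi$ with $\lambda_1 = n$ (after normalizing the radius), so $\phi$ lies in the first eigenspace; these are exactly the restrictions to $S^n$ of linear functions on $\mathbb{R}^{n+1}$, i.e. $\phi = a + \langle b, x\rangle$ for a constant $a$ and a vector $b$. Each such $\phi$ is realized precisely by pulling back $\bar g$ under a conformal diffeomorphism of the sphere (a rotation composed with the stereographic dilations described around equation (\ref{transform under dilation})), up to a constant scale. Matching the standard normalization of these dilation metrics against $u_\alpha$ then identifies $g$, up to a constant scale factor, with $\Psi^*\bar g$ for some conformal diffeomorphism $\Psi$, which is the claim.

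The main obstacle is the integration-by-parts step: one must choose the correct weighted Bochner-type integral and manipulate it so that, after using both the divergence-free condition on $\mathring{\mathrm{Ric}}_g$ and the Hessian identity, everything collapses to manifestly non-negative terms—there is genuine bookkeeping with the conformal weights $\phi^{k}$ and the dimensional constant $(n-2)$, and a sign error anywhere makes the argument fail. Verifying that every element of the first eigenspace is actually attained by a conformal diffeomorphism (rather than merely showing $\phi$ has this form) is a secondary point that should be handled by explicitly composing the stereographic projection with a Euclidean dilation and reading off the conformal factor from (\ref{transform under dilation}).
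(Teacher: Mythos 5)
Your argument through the vanishing of the traceless Ricci tensor is exactly the paper's: both use the conformal transformation law for the traceless Ricci tensor $B$, the fact that the round metric is Einstein so $\bar{B}=0$, the divergence-free property $\nabla_j B^{jk}=0$ coming from constant scalar curvature via the contracted second Bianchi identity, and the weighted integral $\int_{S^n}\phi|B|^2\,dV_g$ integrated by parts. Where you genuinely diverge is the endgame. The paper, having shown $g$ is Einstein, adds that $W=0$ because $g$ is locally conformally flat, concludes the full curvature tensor has the constant-sectional-curvature form, and appeals to Killing--Hopf to produce an isometry with the round sphere, which is the desired conformal diffeomorphism. You instead keep the byproduct of the rigidity step, namely the Obata equation $\nabla^2_{\bar g}\phi=\tfrac1n(\Delta_{\bar g}\phi)\bar g$, classify its solutions on the round sphere, and match them against the conformal factors $t_\alpha$ of the stereographic dilations in \eqref{transform under dilation}. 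Both routes are correct and standard. The paper's is shorter but leans on two external facts (conformal flatness of the sphere and Killing--Hopf); yours is longer but fully explicit, producing the conformal diffeomorphism concretely from the coefficients of $\phi$ and avoiding the topological classification theorem. Two small points to tighten in your version: first, the traceless Hessian equation does not directly say $\phi$ is a first eigenfunction --- you must take a divergence and use the Ricci identity to get $\Delta_{\bar g}\phi+n\phi=\mathrm{const}$, so $\phi$ is a \emph{constant plus} a first eigenfunction, i.e. $\phi=a+\langle b,x\rangle$ as you in fact wrote; second, positivity of $\phi$ forces $a>|b|$, which is precisely the condition under which $a+\langle b,x\rangle$ is realized by a rotation composed with a dilation $\sigma^{-1}\delta_\alpha\sigma$ and a constant rescaling, so that verification closes the argument rather than being optional.
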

\begin{proof}
We start by showing that $g$ is the Einstein metric (see (\ref{Einstein condition}) for the definition of the Einstein metric). By the fact that the standard metric $\bar{g}$ has a constant sectional curvature and it is an Einstein metric, we get the following.
\begin{align*}
0 = \bar{B}_{jk} =  B_{jk} + (n-2)\phi^{-1}\left( \nabla_j\nabla_k\phi + {1/n}\Delta \phi g_{jk} \right)
\end{align*}
Since $B$ is traceless, $B^{jk}g_{jk} = 0$. Then we can directly compute the norm of $B$ by
\begin{align*}
\int_{s^n} \phi |B|^2dVol_g &= \int_{s^n} \phi B_{jk}B^{jk}dVol_g\\
                                              &= -(n-2)\int_{s^n} B^{jk}\left( \nabla_j\nabla_k \phi + \frac{1}{n}\Delta\phi g_{jk}\right)dVol_g\\
                                              &= -(n-2)\int_{s^n} B^{jk}\left( \nabla_j\nabla_k \phi\right)dVol_g\\
                                              &= (n-2)\int_{s^n} \nabla_jB^{jk}\left( \nabla_k \phi\right)dVol_g\\
\end{align*}
Notice $\nabla_jB^{jk} = 0$, then $\int_{s^n} \phi |B|^2dVol_g  = 0$ and $B=0$. On the other hand, $g$ and $\bar{g}$ are both conformal to a flat metric on $\mathbb{R}^n$, and we have $W=0$. As both $W=0$ and $B=0$, the curvature tensor is
\begin{align*}
R_{ijlk} = \frac{R}{n(n-2)}(g_{ik}g_{jl} - g_{il}g_{jk})
\end{align*}
This is the same as the standard metric on $S^n$, hence by by Killing-Hopf (see Theorem \ref{Killing-Hopf}) $g$ has constant sectional curvature (not just constant scalar curvature). Therefore, $(S^n, g)$ is isometric to the standard $(S^n, \bar{g})$ by $\sigma:S^n \to S^n$, hence $g = \sigma^*\bar{g}$, and this isometry is the desired conformal diffeomorphism. 
\end{proof}

Now we have shown that the group of metrics $\bar{g}$ is a conformal diffeomorphism. However, they are not compact, that is, the family of metrics  $\delta_{\alpha}^{*} (\sigma^{-1})^{*} \bar{g}$ on the sphere are not uniformly bounded. Therefore, it is crucial to prove the existence of extremals on the sphere. This leads to the other proposition given by Uhlenbeck \cite{Sacks1981TheEO}.

\begin{proposition}\label{Ulenbeck}
There exists a positive $C^\infty$ function $\psi$ in $S^n$ that satisfies $Q_{\bar{g}}(\psi) = \lambda (S^n)$.
\end{proposition}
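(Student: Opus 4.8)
The plan is to recover the extremal on $S^n$ by a subcritical approximation combined with the conformal symmetry of the round sphere, which is precisely what makes the otherwise-absent compactness available. First I would fix $2<p<2^{*}$ and consider
\begin{align*}
Q_p(\phi)=\frac{\int_{S^n}\bigl(c_n|\nabla\phi|^2+R\phi^2\bigr)\,dVol_{\bar g}}{|\phi|_p^2},
\end{align*}
where $R$ is the (constant) scalar curvature of $\bar g$, with infimum $\lambda_p$ over $0\ne\phi\in W^{1,2}(S^n)$. Since $S^n$ is compact, the embedding $W^{1,2}(S^n)\hookrightarrow L^p(S^n)$ is compact for $p<2^{*}$ by Rellich--Kondrachov (see \ref{Rellich--Kondrachov Compactness Theorem}), so the direct method produces a minimizer $\psi_p$; replacing it by $|\psi_p|$ and rescaling so that $|\psi_p|_p=1$, elliptic regularity for the Laplacian together with the strong maximum principle gives $\psi_p\in C^\infty(S^n)$, $\psi_p>0$, solving $-c_n\Delta\psi_p+R\psi_p=\lambda_p\,\psi_p^{\,p-1}$. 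A short Hölder estimate ($|\phi|_p\le vol_{\bar g}(S^n)^{1/p-1/2^{*}}|\phi|_{2^{*}}$), together with testing $Q$ against a fixed smooth function, yields $\lambda_p\to\lambda(S^n)$ as $p\uparrow 2^{*}$, using $\lambda(S^n)>0$.

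Next I would try to pass to the limit. From $|\psi_p|_p=1$ and $\int_{S^n}(c_n|\nabla\psi_p|^2+R\psi_p^2)\,dVol_{\bar g}=\lambda_p$ bounded, $\{\psi_p\}$ is bounded in $W^{1,2}(S^n)$; after a subsequence $\psi_p\rightharpoonup\psi$ weakly in $W^{1,2}$, strongly in $L^2$, and a.e. If no mass escapes, i.e. $|\psi|_{2^{*}}=1$, then weak lower semicontinuity of the Dirichlet energy plus strong $L^2$ convergence give $Q_{\bar g}(\psi)\le\liminf_p Q_p(\psi_p)=\lambda(S^n)$, so $\psi$ is the desired minimizer (smoothness and positivity again by regularity and the maximum principle for $-c_n\Delta\psi+R\psi=\lambda(S^n)\psi^{2^{*}-1}$). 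The whole difficulty is the failure of compactness of $W^{1,2}(S^n)\hookrightarrow L^{2^{*}}(S^n)$: the $L^p$-mass of $\psi_p$ may concentrate at a point $x_0$, forcing $\psi\equiv 0$.

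The key observation is that on $S^n$ this concentration is entirely accounted for by the explicit non-compact conformal group generated by the rotations and the dilations $\sigma^{-1}\delta_\alpha\sigma$ of (\ref{transform under dilation}). Since $Q_{\bar g}$ is conformally invariant — $Q_{\bar g}(\phi)=Q_{\bar g}(\tilde\phi)$, where $\tilde\phi$ is $\phi\circ\Phi$ multiplied by the appropriate power of the conformal factor of a conformal diffeomorphism $\Phi$ of $S^n$ — I would, whenever $\psi_p$ concentrates at $x_0$ at scale $\alpha_p\to 0$, replace $\psi_p$ by the conformally equivalent $\tilde\psi_p$ obtained by composing with a conformal diffeomorphism zooming in on $x_0$ at precisely the rate $\alpha_p$. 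This leaves $Q_{\bar g}(\tilde\psi_p)\to\lambda(S^n)$ and $|\tilde\psi_p|_p=1$ unchanged, but the scale is chosen (e.g. by normalizing $\sup_{x}\int_{B_r(x)}|\tilde\psi_p|^{p}$ to a fixed value $<1$) so that the concentration is undone: a definite amount of $L^p$-mass now sits in a fixed region and cannot escape, and the weak $W^{1,2}$-limit $\psi$ of $\tilde\psi_p$ is nonzero.

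The main obstacle is upgrading ``$\psi\ne 0$'' to ``no mass is lost'', i.e. $|\psi|_{2^{*}}=1$, so that $\psi$ is an honest minimizer rather than a strict drop in energy. This is where I would invoke the concentration--compactness dichotomy of Lions (\cite{lions1984concentration1,lions1984concentration2}): applied to the measures $|\tilde\psi_p|^{2^{*}}\,dVol_{\bar g}$ it leaves only the alternatives of strong convergence in $L^{2^{*}}$, or splitting of the mass into separated lumps; the latter is excluded by the strict subadditivity of $\lambda(S^n)$ — equivalently, by the fact that $\lambda(S^n)$ is the sharp constant in the Sobolev inequality on $\mathbb{R}^n$, whose extremals (the single ``bubbles'' $u_\alpha$) cannot be split at lower cost. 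Hence $\tilde\psi_p\to\psi$ strongly in $L^{2^{*}}$, $|\psi|_{2^{*}}=1$, and $Q_{\bar g}(\psi)=\lambda(S^n)$; elliptic regularity and the strong maximum principle then give that $\psi$ is smooth and strictly positive. (Alternatively, following Sacks--Uhlenbeck directly: in the concentration limit a nonzero bubble solution on $\mathbb{R}^n$ splits off which, pulled back to $S^n$ via $\sigma$, is itself an extremal of $Q_{\bar g}$, so one may as well assume the minimizing sequence converges from the outset.)
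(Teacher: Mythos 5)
Your proposal follows the same overall strategy as the paper's proof: approximate by subcritical minimizers $\phi_s$ (whose existence the paper takes from Proposition \ref{yamabe propostion} via Rellich--Kondrachov), observe that the only enemy is concentration at a point, and defeat it by renormalizing with the non-compact conformal group $\sigma^{-1}\delta_\alpha\sigma$ before passing to the limit. Where you genuinely diverge is in the two technical mechanisms. For the renormalization, the paper fixes the scale $\alpha$ by demanding $\psi_s(Q)=t_\alpha\kappa_\alpha^*\phi_s(Q)=1$ at the point where $\phi_s$ peaks, whereas you fix it by pinning the concentration function $\sup_x\int_{B_r(x)}|\tilde\psi_p|^p$ at a value strictly below the total mass; both are legitimate, but yours requires a continuity/intermediate-value argument in $\alpha$ that you should spell out. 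For the passage to the limit, the paper works pointwise: it derives the uniform bound $\psi_s\leq((1-\xi)/2)^{(2-n)/2}$ away from the north pole, extracts $C^\infty$ convergence on $S^n\setminus P$, and invokes the weak removable singularities theorem \ref{REMOVABLE} plus a H\"older estimate $|\psi_s|_p^p\geq V(S^n)^{1-p/s}|\phi_s|_s^p=1$ to see no mass is lost. You instead stay at the level of measures, invoking the Lions dichotomy and strict subadditivity ($t^{2/2^*}+(1-t)^{2/2^*}>1$ for $0<t<1$), which is exactly the machinery the paper deploys later in the proof of Theorem \ref{Yamabe, Trudinger, Aubin}; this is cleaner and avoids the removable-singularity step. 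The one point needing care in your route: on $S^n$ itself the strict inequality $\lambda(M)<\lambda(S^n)$ is unavailable, so subadditivity cannot rule out the total-concentration case $t=0$ — that exclusion must come entirely from your renormalization forcing $\psi\neq 0$ (vanishing being impossible on a compact manifold with $|\tilde\psi_p|_p=1$, and single-point concentration being incompatible with the normalization of the concentration function). You gesture at this, but it is the load-bearing step and deserves an explicit argument.
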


\begin{proof}
Without loss of generality, assume $V_g(M) = 1$.
For $2\leq s< p$ (recall $p=2^*$), let $\phi_s$ be the minimizer of $Q(\phi)/|\phi|_{s}^2 = \lambda_s$, with $|\phi_s|_s=1.$ (for this part, see \ref{yamabe propostion}). If $\phi_s$ is uniformly bounded for all $s$, then Ascoli-Arzela (see C.7 in the book \cite{evans}) implies that up to the subsequence $\phi_s$ converges to $\phi\in C^{\infty}$. Therefore, we really care about the case when $\phi_s$ is not uniformly bounded. Composing with a rotation, we may assume that each $\phi_s$ achieves its supremum at the south pole (Q), and $\phi_s(Q) \rightarrow \infty$. 
Now let $\kappa = \sigma^{-1} \delta_{\alpha} \sigma,$  we define
\begin{align*}
\psi_s = t_\alpha\kappa_\alpha^*\phi
\end{align*}
for each $s < p.$ For each $s$, choose the value of $\alpha$ so that $\psi_s(Q)=1$ at the south pole. Notice that
$$
t_\alpha = \left(\frac{(1+\xi) + \alpha^2(1-\xi)}{2\alpha}\right)^{\frac{2-n}{2}},
$$
where $t_\alpha$ is the conformal factor, and at the south pole $t_\alpha(Q) = \alpha^{(2-n)/2}$, so that for each $s$, we get 
$$
\alpha^{(2-n)/2}\phi_s(Q)=1,
$$ 
and 
$$
\lim_{s\to p}\alpha = \infty.
$$
For simplicity of computation, let us denote
\begin{align*}
\square u= -c_n\Delta u + Ru
\end{align*}
Notice $t_\alpha$ is part of the conformal translation $g_\alpha = \kappa_\alpha^*\bar{g} = t_\alpha^{p-2}\bar{g}$, this definition will make sure that, if the pull back operator $\square_\alpha = \kappa^*\square u$, then
\begin{align*}
\square_\alpha (t_\alpha^{-1}u) = t_\alpha^{1-p} \square u
\end{align*}
which implies
\begin{align*}
\int_{S^n}\square(\phi_s)\phi_s dVol_{\bar{g}} &= \int_{S^n}\square_\alpha(\kappa_\alpha^*\phi_s)\kappa_\alpha^*\phi_s dVol_{g_\alpha}\\
                                                                      &= \int_{S^n}\square_\alpha(t_\alpha^{-1}\psi_s)t_\alpha^{-1}\psi_s (t_\alpha^{p-2})^{n/2}dVol_{\bar{g}}\\
                                                                      &= \int_{S^n}\square_\alpha(t_\alpha^{-1}\psi_s)t_\alpha^{-1}\psi_s t_\alpha^pdVol_{\bar{g}}\\
                                                                      &= \int_{S^n}t_\alpha^{1-p}\square(\psi_s)t_\alpha^{-1}\psi_s t_\alpha^pdVol_{\bar{g}}\\
                                                                      &= \int_{S^n}\square(\psi_s)\psi_s dVol_{\bar{g}} 
\end{align*}
Now we also assume $\lambda(S^n) > 0$, then 
$$\int_{S^n}\square(u)u dVol_{\bar{g}} \geq \frac{\lambda(S^n)}{2}|u|_{p}\geq \frac{\lambda(S^n)}{2}|u|_{2}$$
So there is constant $C>2c_nR/\lambda(S^n)$, such that 
$$C\int_{S^n}\square(u)u dVol_{\bar{g}} \geq |u|_{1,2}$$
On the other hand it is easy to show that, there exist $C'$, with
$$\int_{S^n}\square(u)u dVol_{\bar{g}} = \int_{S^n}c_n(\langle \nabla u, \nabla u\rangle  + Ru^2)dVol_{\bar{g}} \leq C' |u|_{1,2}$$
Then we get that
\begin{align*}
|\psi_s|_{1,2}\leq C \int_{S^n}\square(\psi_s)\psi_s dVol_{\bar{g}} = C \int_{S^n}\square(\phi_s)\phi_s dVol_{\bar{g}} \leq CC' |\phi_s|_{1,2}
\end{align*}
Also direct computation as below shows that
\begin{align*}
\square(\psi_s) &= t_\alpha^{p-1}t_\alpha^{1-p}\square(\psi_s)\\
                         &= t_\alpha^{p-1}t_\alpha^{1-p}\square(t_\alpha\kappa^*\phi_s)\\
                         &= t_\alpha^{p-1}\square_\alpha(\kappa^*\phi_s)\\
                         &= t_\alpha^{p-1}\lambda_s\kappa^*\phi_s^{s-1}\\
                         &= t_\alpha^{p-s}\lambda_s\psi_s^{s-1}
\end{align*}
A brief summarize, so far we get
\begin{enumerate}
\item $\psi_s$ is bounded in $W^{1,2}(M)$
\item $\square(\psi_s) = t_\alpha^{p-s}\lambda_s\psi_s^{s-1}$
\item $\alpha_s$ goes to $\infty$ as $s\to 2^*$
\end{enumerate}
Since all the functions $\phi_s$ are smooth on $S^n$, we know that each $\psi_s$ is also smooth. Also, for any $\xi<1$ we have
$$\lim_{\alpha\to\infty}\frac{t_\alpha(\xi)}{\alpha^{\frac{2-n}{2}}} = ((1-\xi)/2)^{\frac{2-n}{2}}$$
So as $s\to p$ and $\alpha\to\infty$, $\psi_s$ is bounded by $((1-\xi)/2)^{\frac{2-n}{2}}$, which does not depend on $s$. Therefore, for each $\epsilon>0$, $K_\epsilon$ is the $\epsilon$ neighborhood of the north pole $P$, then $\psi_s$ is uniformly bounded on $S^n\setminus K_\epsilon$. This implies that up to the subsequence, $\psi_s$ converges on $C^{\infty}(S^n\setminus P)$. \\

On the other hand $\lambda_s\to\lambda(S^n)$, and $t_\alpha^{p-s}\leq 1$ for $s\to p$, so we conclude that at each point of $S^n\setminus P$, $\square \psi = f\psi$, for $f\leq \lambda(S^n)$. Also, $|\psi|_p^p \leq |\psi|_{1,2}$, so that $\psi^{p-2}\in L^{n/2}(S^n)$. By the weak removable singularities theorem (see \ref{REMOVABLE}), the singularity at point $P$ is weakly removable; hence there is $\psi\in W^{1,2}(S^n)$, weakly satisfies the equation $\square \psi = f\psi^{p-1}$. Together with 
\begin{align*}
|\psi_s|^p_p=|\phi_s|^p_p\geq V(S^n)^{1-p/s}|\phi|_s^p = 1
\end{align*}
We get that
\begin{align*}
\frac{\int_{S^n}\psi\square\psi dVol_{\bar{g}}}{|\psi_s|^p_p} &= \frac{\int_{S^n}\phi\square\phi dVol_{\bar{g}}}{|\psi_s|^p_p}\\
                                                                                                      &= \frac{\int_{S^n}\phi\square\phi dVol_{\bar{g}}}{|\phi_s|^s_p}\\
                                                                                                      &= \lambda_s
\end{align*}
Taking limit $s\to p$, we get
$$\frac{\int_{S^n}\psi\square\psi dVol_{\bar{g}}}{|\psi_s|^p_p} \leq \lambda(S^n)$$
By definition of $\lambda(S^n)$, we have that 
$$\frac{\int_{S^n}\psi\square\psi dVol_{\bar{g}}}{|\psi_s|^p_p} = \lambda(S^n)$$
This $\psi$ is what we are looking for.
\end{proof}
A notable conclusion is that $\lambda (S^n)$ is the optimal constant for Sobolev embedding in $\mathbb{R}^n$ by the above argument. This is helpful in the later part of the concentration compactness lemma (see Lemma \ref{Lions}).
\section{Proof of Theorem \ref{Yamabe, Trudinger, Aubin}}
Now we proof the main Theorem \ref{Yamabe, Trudinger, Aubin} of this note, that is, when $\lambda(M)<\lambda\left(S^{n}\right)$, we can find a minimizer of the Yamabe functional (\ref{yamabe functional}) and therefore a solution to the Yamabe problem. We follow the approach of concentration compactness by Lion \cite{lions1984concentration1,lions1984concentration2} in the survey \cite{Neumayer}, which plays an important role in the proof.



 \begin{lemma}[Lions]\label{Lions}
Suppose $\left\{u_{k}\right\}$ is uniformly bounded in $W^{1,2}(M)$, so $u_{k} \rightharpoonup u \in W^{1,2}(M)$. Up to subsequences, we can assume
$$
\begin{aligned}
\mu_{k} &:=\left|\nabla u_{k}\right|^{2} d \operatorname{vol}_{g} \stackrel{*}{\rightharpoonup} \mu, \\
\nu_{k} &:=\left|u_{k}\right|^{2^{*}} d \operatorname{vol}_{g} \stackrel{*}{\rightharpoonup} \nu .
\end{aligned}
$$
Then
\begin{equation}
\nu=|u|^{2^{*}} d \operatorname{vol}_{g}+\sum_{j \in J} \nu_{j} \delta_{p_{j}},\label{eq3.1}
\end{equation}
\begin{equation}
\mu \geq|\nabla u|^{2} d \operatorname{vol}_{g}+\sigma_{n}^{2} \sum_{j \in J} \nu_{j}^{2/{2^*}} \delta_{p_{j}}\label{eq3.2}
\end{equation}
where $J$ is an at most countable set of points in $M$.
\end{lemma}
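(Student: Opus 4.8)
The plan is to follow Lions' original argument: reduce to the case $u=0$, derive a \emph{reverse} Hölder inequality between the two defect measures from the sharp Sobolev inequality, and then read off the atomic structure from the fact that the exponent $2^*/2 = n/(n-2)$ is strictly bigger than $1$. For the reduction, set $v_k = u_k - u \rightharpoonup 0$ in $W^{1,2}(M)$. Passing to a subsequence, Rellich--Kondrachov (see \ref{Rellich--Kondrachov Compactness Theorem}) gives $v_k \to 0$ in $L^2(M)$, hence a.e., so by the Brezis--Lieb lemma $|u_k|^{2^*} - |v_k|^{2^*} - |u|^{2^*} \to 0$ in $L^1(M)$ and therefore $|v_k|^{2^*}\,d\operatorname{vol}_g \stackrel{*}{\rightharpoonup} \widetilde\nu := \nu - |u|^{2^*}\,d\operatorname{vol}_g \ge 0$; expanding $|\nabla v_k|^2 = |\nabla u_k|^2 - 2\langle\nabla u_k,\nabla u\rangle + |\nabla u|^2$ and using $\nabla u_k \rightharpoonup \nabla u$ in $L^2$ to see $\langle\nabla u_k,\nabla u\rangle\,d\operatorname{vol}_g \stackrel{*}{\rightharpoonup} |\nabla u|^2\,d\operatorname{vol}_g$ gives $|\nabla v_k|^2\,d\operatorname{vol}_g \stackrel{*}{\rightharpoonup} \widetilde\mu := \mu - |\nabla u|^2\,d\operatorname{vol}_g \ge 0$. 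Thus it suffices to show $\widetilde\nu = \sum_{j\in J}\nu_j\delta_{p_j}$ with $J$ countable and $\widetilde\mu \ge \sigma_n^2\sum_{j\in J}\nu_j^{2/2^*}\delta_{p_j}$.

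\textbf{The reverse Hölder inequality.} Fix a small geodesic ball $B = B_r(q)$ and $\phi \in C_c^\infty(B)$. In normal coordinates the metric is $\delta + O(r^2)$, so the Euclidean Sobolev inequality with its sharp constant $\sigma_n$ (the optimal Sobolev constant identified in the previous section) transplants to $B$ with an $O(r^2)$ loss, giving $\|\phi v_k\|_{2^*}^2 \le \big(\sigma_n^{-2} + O(r^2)\big)\|\nabla(\phi v_k)\|_2^2$. Writing $\nabla(\phi v_k) = \phi\nabla v_k + v_k\nabla\phi$ and using $\|v_k\|_2 \to 0$ to kill the cross term and the $|v_k|^2|\nabla\phi|^2$ term, we obtain $\|\nabla(\phi v_k)\|_2^2 \to \int \phi^2\,d\widetilde\mu$ and $\|\phi v_k\|_{2^*}^{2^*} \to \int |\phi|^{2^*}\,d\widetilde\nu$. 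Passing to the limit in $k$ and then letting $r\to 0$ yields, for every smooth $\phi$ supported in a small ball — and hence, via a cutoff for $B_\rho(x)$ inside $B_{2\rho}(x)$, for such pairs of balls —
\[
\Big(\int|\phi|^{2^*}\,d\widetilde\nu\Big)^{1/2^*}\le\sigma_n^{-1}\Big(\int\phi^2\,d\widetilde\mu\Big)^{1/2},\qquad\text{so}\qquad\widetilde\nu\big(B_\rho(x)\big)\le\sigma_n^{-2^*}\,\widetilde\mu\big(B_{2\rho}(x)\big)^{2^*/2}.
\]

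\textbf{Extracting the atoms and reassembling.} Since $2^*/2 = n/(n-2) > 1$, the second estimate forces $\widetilde\nu \ll \widetilde\mu$; writing $\widetilde\nu = f\,\widetilde\mu$ by Radon--Nikodym and applying the Besicovitch--Lebesgue differentiation theorem gives, for $\widetilde\mu$-a.e.\ $x$, $f(x) \le \sigma_n^{-2^*}\liminf_{\rho\to 0}\widetilde\mu\big(B_{2\rho}(x)\big)^{2^*/2 - 1}$, which vanishes unless $x$ is an atom of $\widetilde\mu$. As a finite Borel measure has at most countably many atoms, $\widetilde\nu = \sum_{j\in J}\nu_j\delta_{p_j}$ with $J$ countable, which is (\ref{eq3.1}). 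Testing the displayed inequality against cutoffs decreasing to the indicator of $\{p_j\}$ gives $\nu_j^{1/2^*} \le \sigma_n^{-1}\widetilde\mu(\{p_j\})^{1/2}$, i.e.\ $\widetilde\mu(\{p_j\}) \ge \sigma_n^2\nu_j^{2/2^*}$; since the $p_j$ are distinct and $\widetilde\mu \ge 0$ we get $\widetilde\mu \ge \sigma_n^2\sum_{j\in J}\nu_j^{2/2^*}\delta_{p_j}$, and since $\mu = |\nabla u|^2\,d\operatorname{vol}_g + \widetilde\mu$ by construction, this is (\ref{eq3.2}).

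\textbf{Main obstacle.} The delicate point is making the constant in the transplanted Sobolev inequality \emph{exactly} the sharp Euclidean constant $\sigma_n^{-2}$ in the limit, since this is precisely what produces the correct coefficient $\sigma_n^2$ in (\ref{eq3.2}); this forces us to localize to geodesic balls and to send the radius to $0$ only \emph{after} passing to the limit in $k$ (alternatively, one may invoke Aubin's sharp Sobolev inequality on compact manifolds). The remaining ingredients — Brezis--Lieb, weak lower semicontinuity, Radon--Nikodym together with the differentiation theorem, and the repeated extraction of subsequences — are routine.
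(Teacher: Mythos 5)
Your proposal is correct and follows essentially the same route as the paper's proof: reduce to $v_k=u_k-u$, use a.e.\ convergence (Brezis--Lieb) to identify the defect measure for $\nu$, derive the reverse H\"older inequality $\sigma_n\bigl(\int|\xi|^{2^*}d\widetilde\nu\bigr)^{2/2^*}\le\int\xi^2\,d\widetilde\mu$ from the sharp Sobolev inequality after killing the cross terms, and then extract the atoms via Radon--Nikodym and differentiation. If anything, you are more careful than the paper on the one delicate point --- obtaining the \emph{sharp} constant $\sigma_n$ by localizing to small geodesic balls and sending the radius to zero only after $k\to\infty$ --- which the paper elides by applying the Euclidean sharp inequality directly on $M$.
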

\begin{proof}[Proof of Lemma \ref{Lions}.]
Let
$$
\begin{aligned}
v_{k} &:=\left(u_{k}-u\right) \rightharpoonup 0 \text { in } W^{1,2} \text { and } L^{2^{*}} \\
\omega_{k} &:=\left(\left|u_{k}\right|^{2^{*}}-|u|^{2^{*}}\right) d x \rightharpoonup \omega \\
\tilde{\mu}_{k} &:=\left|\nabla v_{k}\right|^{2} d x \rightharpoonup \tilde{\mu}
\end{aligned}
$$
The existence of the above weak limits are given by Banach-Alaoglu theorem (see Theorem 23.5 in the book \cite{meise1997introduction}). For $k \rightarrow + \infty,$ we have
$lim \left|u_{k}- u\right|^{2^{*}}_{2^{*}} \geq 0$, then $lim \left|u_{k}\right|^{2^{*}}_{2^{*}}-|u|^{2^{*}}_{2^{*}} \geq 0$ given by the lower semicontinuity. To show 
$$
lim \left|u_{k}- u\right|^{2^{*}}_{2^{*}} \geq 0 = lim \left|u_{k}\right|^{2^{*}}_{2^{*}}-|u|^{2^{*}}_{2^{*}} \geq 0
$$
one must show $u_k \rightarrow u$ a.e. It is not difficult since we have $u_k$ bounded in $W^{1,2}$, that is, for $2<p<2^*,$ we have $u_k \rightarrow u$ in $L^p$, which implies $u_k \rightarrow u$ a.e. Therefore, we have $\omega_{k}=\left|v_{k}\right|^{2^{*}} d x+o(1)$. Now, take any $\xi \in C_{c}^{\infty}\left(\mathbb{R}^{n}\right)$. Applying the Sobolev inequality, we have 
\begin{align}\label{equation lion}
\int \xi^{2^{*}} d \omega=\lim \int\left|\xi v_{k}\right|^{2^{*}} d x & \leq \liminf \frac{1}{\sigma_{n}^{2^{*}}}\left(\int\left|\nabla\left(\xi v_{k}\right)\right|^{2}\right)^{2^{*} / 2} 
\end{align}
If $v_k$ weakly converge to $0$ in $W^{1,2}(M)$, then for any $\xi\in C_c(M)$ we have
\begin{align*}
\int\vert\nabla (\xi v_k)\vert^2 &= \int \left\langle v_k\nabla\xi + \xi\nabla v_k, v_k\nabla\xi + \xi\nabla v_k\right \rangle \\
                                      &= \int v_k^2\vert\nabla\xi\vert^2 + \xi^2\vert\nabla v_k\vert^2 + 2\xi v_k\left \langle \nabla \xi, \nabla v_k\right \rangle
\end{align*}
And given by the strong convergence of $v_k$ in $L^2(M)$, we get
\begin{align*}
\lim\limits_{k\to\infty}\int v_k^2\vert\nabla\xi\vert^2  \leq \sup(\vert\nabla\xi\vert^2 )\lim\limits_{k\to\infty}\int v_k^2  = 0
\end{align*}
 Also given by the strong convergence of $v_k$ in $L^2(M)$, and the fact that $\vert v_k \vert_{1,2}$ is bounded for all $k$, we have
\begin{align*}
\lim\limits_{k\to\infty}\int |\xi v_k\left \langle\nabla \xi, \nabla v_k\right \rangle| &\leq \lim\limits_{k\to\infty}\int |\xi|\cdot|v_k|\cdot|\nabla \xi|\cdot|\nabla v_k|\\
                                                                                               &\leq \lim\limits_{k\to\infty}\left(\int v_k^2 \int\xi^2|\nabla \xi|^2|\nabla v_k|^2\right)^{1/2}\\
                                                                                               &\leq \lim\limits_{k\to\infty}\sup(|\xi\nabla \xi|)\left(\int v_k^2 \int|\nabla v_k|^2\right)^{1/2}\\
                                                                                               &=0
\end{align*}
So we have that 
\begin{align*}
\liminf\limits_{k\to\infty}\int|\nabla (\xi v_k)|^2 &= \liminf\limits_{k\to\infty}\int \xi^2|\nabla v_k|^2
\end{align*}

Following from \eqref{equation lion} we get that,
\begin{align*}
\int \xi^{2^{*}} d \omega =\liminf \frac{1}{\sigma_{n}^{2^{*}}}\left(\int \xi^{2}\left|\nabla v_{k}\right|^{2}\right)^{2^{*} / 2}=\frac{1}{\sigma_{n}^{2^{*}}}\left(\int \xi^{2} d \tilde{\mu}\right)^{2^{*} / 2}
\end{align*}
Rearranging the powers, we have
\begin{equation}
\sigma_{n}\left(\int \xi^{2^{*}} d \omega\right)^{2 / 2^{*}} \leq\left(\int \xi^{2} d \tilde{\mu}\right) \quad \forall \xi \in C_{c}^{\infty}\left(\mathbb{R}^{n}\right).\label{eq3.5}
\end{equation}
This very unnatural thing looks like a reverse Hölder's inequality. Applied to $\xi$ approximating the characteristic function of any open set $\Omega$, (\ref{eq3.5}) shows that $\tilde{\mu}$ controls $\omega$ nonlinearly:
\begin{equation}
\sigma_{n}^{2} \omega(\Omega)^{2 / 2^{*}} \leq \tilde{\mu}(\Omega),\label{eq3.6}
\end{equation}
This scaling will force $\omega$ to be supported on a countable set of atoms. Indeed, since $\tilde{\mu}$ is a finite measure, it contains at most countably many atoms, say at $\left\{x_{j}\right\}$. For any point $x \in B_{1} \backslash \bigcup\left\{x_{j}\right\}$, we can take any open set $\Omega$ containing $x$ with $\tilde{\mu}(\Omega) \leq \sigma_{n}^{2}$, so that \eqref{eq3.6} gives us
$$
1 \geq \sigma_{n}^{-2} \tilde{\mu}(\Omega) \geq \omega(\Omega)^{2 / 2^{*}} \geq \omega(\Omega).
$$
In other words, $\omega$ is absolutely continuous with respect to $\tilde{\mu}$ on $B_{1} \backslash \bigcup\left\{x_{j}\right\}$. 
Recall $\omega = fd\tilde{\mu} + \sum_{j \in J} \nu_{j} \delta_{x_{j}},$ by the Radon-Nikodym theorem (see \ref{Radon-Nikodym}), for $\tilde{\mu}$-a.e. $x$, we have
$$
f(x)=\lim _{r \rightarrow 0} \frac{\omega\left(B_{r}(x)\right.}{\tilde{\mu}\left(B_{r}(x)\right)} \stackrel{\eqref{eq3.6}}{\leq} \lim _{r \rightarrow 0} \sigma_{n}^{-2^{*}} \tilde{\mu}\left(B_{r}(x)\right)^{2^{*} / 2-1}=0
$$
Hence, the support of $\omega$ is contained on $\bigcup\left\{x_{j}\right\}$ and so
\begin{align}
\omega=\sum_{j \in J} \nu_{j} \delta_{x_{j}},  \label{omega} 
\end{align}
Now take any $x_{j}$ and by the reverse Hölder's inequality \eqref{eq3.5}, we have $d\tilde{\mu} \geq (d\omega)^{2/{2^*}},$ it follows that
$$
d\tilde{\mu} \geq \sum_{j \in J} \nu_{j}^{2 / 2^{*}} \delta_{x_{j}}.
$$
Now apply to a $\xi$ with $\xi\left(x_{j}\right)=1$, and $\xi=0$ on $B_{r}\left(x_{j}\right)^{c}$, we find
$$
\sigma_{n}^{2} \nu_{j}^{2 / 2^{*}} \delta_{x_{j}} \leq \tilde{\mu}\left(x_{j}\right)
$$
This gives us \eqref{eq3.2} hence concludes the proof.
\end{proof}
Now we can start proofing Theorem \ref{Yamabe, Trudinger, Aubin}.
\subsection{Proof of Theorem \ref{Yamabe, Trudinger, Aubin}}
\begin{proof}[Proof of Theorem \ref{Yamabe, Trudinger, Aubin}]
Let $\left\{u_{k}\right\}$ be a minimizing sequence for $\lambda(M)$. Without loss of generality, we may assume that $\left\|u_{k}\right\|_{2^{*}}=1$. Up to a subsequence, $u_{k} \rightarrow u$ in $L^{2}(M)$ and $u_{k} \rightharpoonup u$ in $W^{1,2}(M)$ and $L^{2^{*}}(M)$ with $\|u\|_{2^{*}}^{2^{*}}=t \in[0,1]$. Note that if $t=1$, then $u_{k} \rightarrow u$ strongly in $L^{2^{*}}$. This gives us the lower semicontinuity of the energy. Since $\{u_k\}_{k=1}^\infty$ converges weakly to $u$ in $L^{2^*}(M)$, then
$
|u|_{2^*}\leq \lim\limits_{k\to\infty}|u_k|_{2^*},
$
and also 
$$
\int_M \langle \nabla u, \nabla u \rangle dVol_g \leq \lim\limits_{k\to\infty}\int_M\langle \nabla u_k, \nabla u_k\rangle  dVol_g.
$$ 
Therefore we get
\begin{align*}
\lambda(M) &\leq \frac{\int_M\langle \nabla u, \nabla u\rangle  + Ru^2 dVol_g}{|u|_{2^*}^2}
                   = \frac{\lambda(M)-A}{1 - B},
\end{align*}
where $0\leq A\leq \lambda(M)$, and $0\leq B\leq 1$. The concentration compactness lemma \ref{Lions} implies that
\begin{align*}
A &\geq c_n\sigma_n^2\sum\limits_{j\in J}v_j^{2/2^*}\\
   &=\lambda(S^n)(\sum\limits_{j\in J} v_j)^{2/2^*}\sum\limits_{j\in J}\left(\frac{v_j}{\sum v_j}\right)^{2/2^*}\\
   &\geq \lambda(S^n)(\sum\limits_{j\in J} v_j)^{2/2^*}\sum\limits_{j\in J}\left(\frac{v_j}{\sum v_j}\right)\\
   &= \lambda(S^n)(\sum\limits_{j\in J} v_j)^{2/2^*}\\
   &= \lambda(S^n)B
\end{align*}
Now together with the assumption that $\lambda(S^n)>\lambda(M)$, we have
\begin{align*}
A > \lambda(M)B,
\end{align*}
For such relation between $A$ and $B$, we get
\begin{align*}
\lambda(M) &\leq \frac{\lambda(M)-A}{1 - B}\\
                   &\leq \frac{\lambda(M)-\lambda(M)B}{1 - B}\\
                   &= \lambda(M)
\end{align*}
Then we get $A=B=0$ and $\lambda(M) = \frac{\lambda(M)-A}{1 - B}$, which is desired. This shows the spirit of the proof, that is, to find the proper relation between A and B such that the equality holds. This exact relation is given by the concentration compactness lemma (\ref{Lions}), that is,
\begin{align}
    A = \int \tilde{\mu} = c_n\int \mu - \int |\nabla u|^2dVol_g, \label{equation A}
\end{align}
and 
\begin{align}
    B = \int \omega = \left(\int \nu - \int |u|^{2^*}dVol_g\right)^{2/2^*}. \label{equation B}
\end{align}
Therefore, by concentration compactness lemma (\ref{Lions}), we have
$$
\lambda(M)=\lim Q\left(u_{k}\right) \geq \int c_{n}|\nabla u|^{2}+R u^{2}+c_{n} \sigma_{n}^{2} \sum \nu_{j}^{2 / 2 *}
$$
Note that $\int c_{n}|\nabla u|^{2}+R u^{2} d \operatorname{vol}_{g}=t^{2 / 2^{*}} Q(u) \geq t^{2 / 2^{*}} \lambda(M)$. Recall $\lambda\left(S^{n}\right)=c_{n} \sigma_{n}^{2}$, we have
$$
\begin{aligned}
\lambda(M) & \geq t^{2 / 2^{*}} \lambda(M)+c_{n} \sigma_{n}^{2} \sum \nu_{j}^{2 / 2^*} \\
& \geq t^{2 / 2^{*}} \lambda(M)+\lambda\left(S^{n}\right)(1-t)^{2 / 2^{*}}\left(\sum \frac{\nu_{j}}{1-t}\right)^{2 / 2^{*}} \\
&=t^{2 / 2^{*}} \lambda(M)+\lambda\left(S^{n}\right)(1-t)^{2 / 2^{*}}
\end{aligned}
$$
The final equality holds because $\sum \nu_{j}=1-t$. Now, since $\lambda\left(S^{n}\right)>\lambda(M)$ and again applying Jensen's inequality, we have
\begin{align}
	\lambda(M) & \leq t^{2 / 2^{*}} \lambda(M)+\lambda\left(S^{n}\right)(1-t)^{2 / 2^{*}}\nonumber \\
	& \leq \lambda(M)\left\{t^{2 / 2^{*}}+(1-t)^{2 / 2^{*}}\right\}\label{eq3.3} \\
	& \leq \lambda(M).\label{eq3.4}
\end{align}
Equality in \eqref{eq3.4} implies that $t=0$ or $t=1$. If $t=0$, then we have a strict inequality in \eqref{eq3.3}. Therefore $t=1$. This establishes the existence of a minimizer $u \in W^{1,2}(M)$. Since $|\nabla u|=|\nabla| u||$ for a.e. $x \in M$, we may assume without loss of generality that $u \geq 0$. The results in the elliptic regularity theory (see \cite{Trudinger}) show that $u$ is smooth, and then the maximum principle ensures that $u$ is positive. Thus, our minimizer is indeed a conformal factor.
\end{proof}

\newpage

\medskip
\printbibliography 
\newpage


\appendix
\section{Basics of Riemannian Geometry}
Most of this comes from the book \cite{Yau} and the book \cite{CarmoR}. For a more fundamental knowledge of Riemannian geometry, see the book \cite{CarmoR}.

\subsection{Curvatures}
\begin{definition}[Riemann curvature]\label{Riemannian curvature}
The curvature $R$ of a Riemannian manifold $M$ is a correspondence that associates to every pair $X,Y\in \Gamma(TM)$, a mapping $R(X,Y): \Gamma(TM)\to\Gamma(TM)$ given by:
$$R(X,Y)Z = \nabla_Y\nabla_X Z - \nabla_X\nabla_Y Z + \nabla_{[X,Y]}Z$$
where $\nabla$ is the Riemannian connection of $M$.\\

The curvature measures the non-commutativity of the covariant derivative. 
\end{definition}

\begin{definition}[Sectional curvature]\label{sectional curvature}
Let $\sigma \subset  T_pM$ be a 2-dim subspace of the tangent space. Then the sectional curvature is:
$$
K(X,Y) = \frac{\left \langle R(X,Y)X,Y\right \rangle }{\left \langle X,X\right \rangle \left \langle Y,Y\right \rangle -\left \langle X,Y\right \rangle ^2},
$$
where $R$ is the Riemann curvature tensor defined above. The sectional curvature is an analog of Gauss curvature on 2-dimensional surface; it is important because the Riemannian curvature tensor is uniquely determined by the sectional curvature. 
\end{definition}

\begin{definition}[Ricci curvature and scalar curvature]\label{ricci and scalar}
Ricci curvature is the contraction of second and last index in curvature tensor:
\begin{align*}
R_{ij} &= \frac{1}{n-1}\sum\limits_{k}R_{ikj}^k\\
          &= \frac{1}{n-1}R_{ikjs}g^{sk}
\end{align*}
Scalar curvature is the contraction of Ricci curvature with the inverse of metric
\begin{align*}
K(p) = R_{ij} g^{ij}
\end{align*}
The geometric meaning of taking $i=j$ is to get the average of sectional curvatures of all the 2-d planes passing through $X_i = X_j.$
\begin{align*}
Ric(X_i, X_i) &= \frac{1}{n-1}\sum\limits_{k\neq i} \left \langle R(X_i, X_k)X_j, X_k\right \rangle \\
                    &= \frac{1}{n-1}\sum\limits_{k\neq i} K_{\sigma_{ik}}
\end{align*}
Over normal coordinate, the metric $g$ can be expressed as
\begin{align*}
g_{ij}(p) = \delta_{ij} + \frac{1}{3}R_{iklj}(p)x^kx^l + O(|x^3|)
\end{align*}
And the volume form is:
\begin{align*}
vol(p) = \sqrt{\det(g(p))} = 1 - \frac{1}{6}Ric_{kl}(p)x^kx^l + O(|x^3|)
\end{align*}
\end{definition}

\begin{proposition}\label{scalar addition}
Let $M=M_1\times M_2$ be the product of two Riemannian manifolds, and $R$ be its curvature tensor, $R_1$, $R_2$ be curvature tensor for$ M_1$ and $M_2$ respectively, then one can relate $R$, $R_1$ and $R_2$ by
\begin{align*}
R(X_1+X_2,Y_1+Y_2,Z_1+Z_2,W_1+W_2)=R_1(X_1,Y_1,Z_1,W_1)+R_2(X_2,Y_2,Z_2,W_2)
\end{align*}
where $X_i,Y_i,Z_i,W_i\in TM_i$.
\end{proposition}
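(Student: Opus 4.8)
The plan is to reduce the statement about the full Riemann curvature tensor of a product manifold to the behavior of the Levi-Civita connection, since the curvature is built entirely out of $\nabla$. First I would set up the right framework: on $M = M_1 \times M_2$, the tangent bundle splits as $TM \cong \pi_1^* TM_1 \oplus \pi_2^* TM_2$, and the product metric is $g = g_1 \oplus g_2$, meaning $g(X_1 + X_2, Y_1 + Y_2) = g_1(X_1, Y_1) + g_2(X_2, Y_2)$ for $X_i, Y_i \in TM_i$ (here I identify vector fields on the factors with their lifts). The key lemma I would prove is that the Levi-Civita connection of $g$ respects the splitting: $\nabla_{X_1} Y_1 = \nabla^1_{X_1} Y_1 \in TM_1$, $\nabla_{X_2} Y_2 = \nabla^2_{X_2} Y_2 \in TM_2$, and all mixed covariant derivatives $\nabla_{X_1} Y_2$ and $\nabla_{X_2} Y_1$ vanish. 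This follows from the Koszul formula together with the facts that $[X_i, Y_j] = 0$ whenever $i \neq j$ (lifts of vector fields from different factors commute) and that $g_1$-quantities are independent of the $M_2$ variables and vice versa; uniqueness of the Levi-Civita connection then pins it down.

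Granting that lemma, the main computation is short. I would evaluate $R(X, Y)Z$ for $X = X_1 + X_2$, etc., by expanding multilinearly using the definition $R(X,Y)Z = \nabla_Y \nabla_X Z - \nabla_X \nabla_Y Z + \nabla_{[X,Y]} Z$. Because $\nabla$ preserves the splitting and kills mixed terms, every term in which the three inputs do not all lie in the same factor drops out: e.g. $\nabla_{Y_2} \nabla_{X_1} Z_1 = \nabla_{Y_2}(\nabla^1_{X_1} Z_1) = 0$ since $\nabla^1_{X_1} Z_1 \in TM_1$ and the connection has no mixed part. Likewise $[X, Y] = [X_1, Y_1] + [X_2, Y_2]$ splits, so the bracket term also decomposes. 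What survives is exactly $R_1(X_1, Y_1)Z_1 + R_2(X_2, Y_2)Z_2$, where $R_i$ is the curvature of $\nabla^i$ (one uses here that $\nabla^i$ of a lifted field computes fiberwise as the connection on $M_i$). Lowering the last index with $g = g_1 \oplus g_2$ and using that $R_1(X_1,Y_1)Z_1 \in TM_1$ pairs to zero against $W_2 \in TM_2$, I obtain the stated identity on the $(0,4)$ tensor: $R(X_1+X_2, \ldots, W_1+W_2) = R_1(X_1,Y_1,Z_1,W_1) + R_2(X_2,Y_2,Z_2,W_2)$.

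The main obstacle — really the only substantive point — is establishing the splitting lemma for the connection cleanly, in particular justifying that a vector field on $M_1$ lifts to one on $M$ whose $g$-covariant derivatives agree with its $\nabla^1$-covariant derivatives and have no component in the $M_2$ direction. The subtlety is purely bookkeeping about pullbacks/lifts: one should check the Christoffel symbols of the product metric in a product chart $(x^1,\ldots,x^{m_1}, y^1, \ldots, y^{m_2})$ directly — the block-diagonal form of $g_{ij}$ together with each block depending only on its own coordinates forces $\Gamma^k_{ij} = 0$ unless all three indices belong to the same factor, and the surviving symbols are precisely those of $g_1$ or $g_2$. Once this is in hand the curvature identity is a one-line consequence, and the passage from the $(1,3)$ version to the $(0,4)$ version stated in the proposition is immediate from orthogonality of the two factors. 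I would present the Christoffel-symbol computation as the heart of the proof and treat the curvature expansion as a corollary.
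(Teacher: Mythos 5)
Your proposal is correct and follows essentially the same route as the paper: the product metric, the Lie bracket, and (via the Koszul formula) the Levi-Civita connection all split along $TM\cong TM_1\oplus TM_2$, after which the curvature identity follows by multilinear expansion. You spell out the vanishing of the mixed covariant derivatives and the cross terms more explicitly than the paper does (which defers the details to do Carmo), but the underlying argument is the same.
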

To show this, we need the following:
\begin{enumerate}
    \item $\left \langle X_1+X_1,Y_1+Y_2\right \rangle =\left \langle X_1,Y_1\right \rangle M_1+\left \langle X_2,Y_2\right \rangle M_2;$
    \item $[X_1+X_2,Y_1+Y_2]M=[X_1,Y_1]M_1+[X_2,Y_2]M_2;$
    \item $\nabla^{M}_{X_1+X_2}(Y_1+Y_2)=\nabla^{M_1}_{X_1}(Y_1)+\nabla^{M_2}_{X_2}(Y_2).$
\end{enumerate}
Here part 1 is simply by definition of product Riemannian manifold, part 2 can be shown in local coordinates, and part 3 can be shown by part 1 and part 2 and along with Koszul formula. For more details, see Exercise 1(a) of Chapter 6 in the book \cite{CarmoR}.

\section{The Conformal Map \texorpdfstring{$\tilde{g} =\phi^{4/(n-2)}g$}{TEXT}}\label{conformal index}

First, to prove the above definition, we have $P(g) = P(kg)$. Assume $\tilde{g} = kg$, where $k$ is a constant. The standard scalar curvature transformation formula for $\tilde{g} = e^{2\phi}g$ is
\begin{align}\label{standard scalar transformation}
\tilde{R} = e^{-2\phi }R-2(n-1)e^{-2\phi }\Delta \phi -(n-2)(n-1)e^{-2\phi }|d\phi |^{2}.
\end{align}
Since $e^{2\phi} = k$ is a constant, we have that $\Delta \phi = 0$, and $d\phi = 0.$ Therefore, $\tilde{R} = e^{-2\phi }R = R/k$, on the other hand
\begin{align*}
Vol_{\tilde{g}} &= \sqrt{\det\tilde{g}}dx\\
                        &= k^{n/2}\sqrt{\det g}dx\\
                        &= k^{n/2}Vol_g\\
\end{align*}
So we get the conformal transformation for Hilbert action
\begin{align*}
\int_M \tilde{R}dVol_{\tilde{g}} &= \int_M k^{-1}Rk^{n/2}dVol_{g}\\
                                                   &= \int_M Rk^{(n-2)/2}dVol_{g}\\
                                                   &= k^{(n-2)/2}\int_M RdVol_{g}
\end{align*}
And the transformation of volume is
\begin{align*}
V(\tilde{g}) &= \int_M dVol_{\tilde{g}}\\
                   &= k^{n/2}\int_M dVol_{g}\\
                   &= k^{n/2}V(g)\\
\end{align*}
Therefore, for $P(g) = \frac{\int_M RdVol_g}{V(g)^\alpha}$, we have 
\begin{align*}
P(kg) &= \frac{k^{(n-2)/2}\int_M RdVol_g}{(k^{n/2})^\alpha V(g)^\alpha}\\
         &= \frac{k^{(n-2)/2}}{(k^{n/2})^\alpha}P(g)\\
         &= P(g)
\end{align*}
Since we need $k^{(n-2)/2} = k^{n\alpha/2}$, we have $\alpha = \frac{n-2}{n} = 2/2^*$, it follows that $2^*=\frac{2n}{(n-1)}.$ \\

Now for more general cases let $\tilde{g} = Ag$, in which $A$ is a smooth function, then we have
\begin{align*}
V(\tilde{g}) &= \int_M dVol_{\tilde{g}}\\
                   &= \int_M A^{n/2}dVol_{g}
\end{align*}
So 
\begin{align*}
P(\tilde{g}) &= \frac{\int_M\tilde{R}dVol_{\tilde{g}}}{\left(\int_M A^{n/2}dVol_{g}\right)^{2/2^*}}\\
                   &= \frac{\int_M\tilde{R}dVol_{\tilde{g}}}{\left(\left(\int_M A^{n/2}dVol_{g}\right)^{\frac{1}{2^*}}\right)^2}\\
                   &= \frac{\int_M\tilde{R}dVol_{\tilde{g}}}{\left(\left(\int_M (A^{\frac{n-2}{4}})^{2^*}dVol_{g}\right)^{\frac{1}{2^*}}\right)^2}\\
                   &= \frac{\int_M\tilde{R}dVol_{\tilde{g}}}{\left(|A^{\frac{n-2}{4}}|_{2*}\right)^2}
\end{align*}
Denote $\phi = A^{\frac{n-2}{4}}$, then the denominator becomes $|\phi|_{2*}^2$. Then by formula (\ref{standard scalar transformation}), we also get
\begin{align}\label{conformal_scalar_curvature}
\tilde{R} = \phi^{1-2^*}(-c_n\Delta\phi + R\phi)
\end{align}
And that
\begin{align*}
\int_M\tilde{R}dVol_{\tilde{g}} &= \int_M\phi^{1-2^*}(-c_n\Delta\phi + R\phi)\phi^{2^*}dVol_g\\
                                                  &= \int_M\phi(-c_n\Delta\phi + R\phi)dVol_g\\
                                                  &= \int_M (-c_n\phi\Delta\phi + R\phi^2)dVol_g\\
                                                  &= \int_M (c_n\left \langle\nabla\phi, \nabla\phi\right \rangle  + R\phi^2)dVol_g\\
\end{align*}
At last we conclude this note by
\begin{align*}
P(\phi) = \frac{\int_M \tilde{R}dVol_{\tilde{g}}}{vol_{\tilde{g}}(M)^{2/2^*}} =  \frac{\int_M (c_n\left \langle\nabla\phi,\nabla\phi\right \rangle  + R\phi^2)dVol_g}{|\phi|_{2^*}^2}.
\end{align*}


\section{Sobolev Space}\label{Sobolev Space}
Most of the materials refer to Chapter 5 of the book \cite{evans}.
\begin{definition}[Sobolev Space]\label{Sobolev Space definition}
Let $1 \leq p \leq \infty$, and let $k \geq 0$ be a natural number. A function $f$ is said to lie in $W^{k,p}(\mathbb{R}^n)$ if its weak derivatives $D f$ exist and lie in $L^p(\mathbb{R}^n)$ for all $j=0,\ldots,k$. If $f$ lies in $W^{k,p}(\mathbb{R}^n)$, we define the $W^{k,p}$ norm of $f$ by the formula
$$ \|f\|_{W^{k,p}(\mathbb{R}^n)} := \sum_{j=0}^k \|Df\|_{L^p(\mathbb{R}^n)}$$.
\end{definition}

\begin{definition}[Sobolev norm] \label{Sobolev norm}
If $u \in W^{k,p}(M)$, we define its norm to be
\begin{equation}
{\lVert u \rVert}_{W^{k,p}(M)}=\left\{
\begin{aligned}
(\sum\!_{\lvert\alpha \rvert \leq k}\int_{M}\vert D^{\alpha}u \vert^p dx)^{\frac{1}{p}}\ \ \ (1\leq p<\infty) \\
\sum\!_{\lvert\alpha \rvert \leq k} \text{ess sup}\textsubscript{M} \vert D^{\alpha}u \vert\ \ \ (p=\infty) .
\end{aligned}
\right.
\end{equation}
\end{definition}

\begin{note}[Sobolev conjugate of p]\label{sobolev conjugate}
If $1\leq p < n$, the Sobolev conjugate of p is
\begin{align*}
    p^*=\frac{np}{n-p}.
\end{align*}
\end{note}
\begin{theorem}[Gagliardo-Nirenberg-Sobolev inequality]\label{Gagliardo-Nirenberg-Sobolev inequality}
Assume $1\leq p < n.$ There exists a constant $C$ depending only on $p$ and $n$, such that
\begin{align}
{\lVert u \rVert}_{L^{p^*}(\mathbb{R}^n)} \leq C{\lVert Du \rVert}_{L^{p}(\mathbb{R}^{n})}
\end{align}
for all $u\in C^{1}_{c}(\mathbb{R}^n).$
\begin{proof}
Assume $p=1$.Since u has compact support, for each $i=1,...,n$ and $x \in \mathbb{R}^n$ we have
\begin{align*}
    u(x)=\int_{- \infty}^{x_i} u_{x_i}(x_1,...x_{i-1},y_i,x_{i+1},...,x_n)dy_i,
\end{align*}
then
\begin{align*}
    \lvert u(x)\rvert \leq \int_{-\infty}^{\infty} \lvert Du(x_1,...x_{i-1},y_i,x_{i+1},...,x_n)\rvert dy_i,
\end{align*}
it follows that
\begin{align}\label{integral1}
    \lvert u(x)\rvert^{\frac{n}{n-1}} \leq \prod_{i=1}^{n} \left( \int_{-\infty}^{\infty} \lvert Du(x_1,...,y_i,...,x_n)\rvert dy_i\right)^{\frac{1}{n-1}}
\end{align}
Integrate (\ref{integral1}) with respect to $x_1,...,x_n$ and using the generalized Hölder inequality, we have
\begin{align*}\label{p=1}
    \int_{\mathbb{R}^n}\lvert u(x)\rvert^{\frac{n}{n-1}} dx 
    &\leq \prod_{i=1}^{n} \left( \int_{-\infty}^{\infty} \int_{-\infty}^{\infty}...\int_{-\infty}^{\infty}\lvert Du\rvert dx_1...dy_i...dx_n\right)^{\frac{1}{n-1}}\\
    &=\left( \int_{\mathbb{R}^n}\lvert Du\rvert dx\right)^{\frac{n}{n-1}}
\end{align*}
From Note \ref{sobolev conjugate} we know $\frac{n}{n-1}$ is the Sobolev conjugate $p^*$ when p = 1, it follows that
\begin{align}
    \int_{\mathbb{R}^n}\lvert u\rvert^{p^*} dx \leq \left( \int_{\mathbb{R}^n}\lvert Du\rvert dx\right)^{p^*}
\end{align}
Now consider the case of $1<p<n$. Choose $\vert v\vert^{\frac{n}{n-1}}=\vert u \vert^{p^*}$, we have
\begin{align*}
    \int_{\mathbb{R}^n} \vert v\vert^{\frac{n}{n-1}} dx
    &\leq \left( \int_{\mathbb{R}^n}\lvert Dv\rvert dx\right)^{\frac{n}{n-1}} \\
    &=\left( \int_{\mathbb{R}^n}\lvert Du\rvert^{\frac{(n-1)p^*}{n}} dx\right)^{\frac{n}{n-1}} \\
    &=\left( \int_{\mathbb{R}^n}\lvert Du\rvert^{\frac{(n-1)}{n}\frac{np}{n-p}} dx\right)^{\frac{n}{n-1}} \\
    &=\left( \int_{\mathbb{R}^n}\lvert Du\rvert^{\frac{(n-1)p}{n-p}} dx\right)^{\frac{n}{n-1}}
\end{align*}
By the definition of derivative we have
\begin{align*}
    \lvert Du\rvert^{\frac{(n-1)p}{n-p}} 
    &= \lvert u^{\frac{(n-1)p}{n-p}-1}\rvert\lvert Du\rvert \\
    &= \lvert u^{\frac{n(p-1)}{n-p}}\rvert\lvert Du\rvert.
\end{align*}
Apply Hölder's inequality, it follows that
\begin{align*}
     \int_{\mathbb{R}^n} \vert v\vert^{\frac{n}{n-1}}dx
     &\leq \left(\int_{\mathbb{R}^n}\lvert u^{\frac{n(p-1)}{n-p}}\rvert\lvert Du\rvert dx\right)^{\frac{n}{n-1}}\\
     &= \left(\int_{\mathbb{R}^n}\lvert u ^{\frac{n(p-1)}{n-p}\cdot \frac{p}{p-1}}\rvert^{\frac{p-1}{p}} \left( \lvert Du\rvert^p\right)^{\frac{1}{p}} dx \right)^{\frac{n}{n-1}}\\
     &= \left(\int_{\mathbb{R}^n}\lvert u ^{\frac{np}{n-p}}\rvert dx \right)^{\frac{p-1}{p}\cdot{\frac{n}{n-1}}} \left(\Vert Du\Vert_{L^p} \right)^{\frac{n}{n-1}}\\
     &= \left(\int_{\mathbb{R}^n}\lvert u \rvert^{p^*} dx \right)^{\frac{n(p-1)}{p(n-1)}} \left(\Vert Du\Vert_{L^p}\right)^{\frac{n}{n-1}}
\end{align*}
Recall $\vert v\vert^{\frac{n}{n-1}}=\vert u \vert^{p^*}$, then
\begin{align*}
    \int_{\mathbb{R}^n}\vert u \vert^{p^*}dx \leq \left(\int_{\mathbb{R}^n}\lvert u\rvert ^{p^*} dx \right)^{\frac{n(p-1)}{p(n-1)}} \left(\Vert Du\Vert_{L^p}\right)^{\frac{n}{n-1}}
\end{align*}
It follows that
\begin{align*}
     \left(\int_{\mathbb{R}^n}\vert u \vert^{p^*}dx \right)^{1-\frac{n(p-1)}{p(n-1)}} 
     =\left(\int_{\mathbb{R}^n}\vert u \vert^{p^*}dx \right)^{\frac{n-p}{p(n-1)}}
     &\leq \left(\Vert Du\Vert_{L^p}\right)^{\frac{n}{n-1}}
\end{align*}
and so
\begin{align*}
    \left(\int_{\mathbb{R}^n}\vert u \vert^{p^*}dx \right)^{\frac{n-p}{np}}
     \leq \left(\Vert Du\Vert_{L^p}\right)
\end{align*}
Notice $\frac{n-p}{np} = \frac{1}{p^*}$, therefore we have
\begin{align*}
    \left(\int_{\mathbb{R}^n}\vert u \vert^{p^*}dx \right)^{\frac{1}{p^*}}
     \leq C\left(\int_{\mathbb{R}^n}\vert Du\vert^{p}dx \right)^{\frac{1}{p}}
\end{align*}
\end{proof}
\end{theorem}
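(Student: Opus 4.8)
The plan is to follow the classical Gagliardo--Nirenberg argument in two steps: first establish the endpoint case $p=1$ by an elementary integration-by-slices estimate, then bootstrap to the full range $1<p<n$ by applying the $p=1$ inequality to a suitable power of $|u|$. For the first step, note that since $u\in C^{1}_{c}(\mathbb{R}^{n})$ the fundamental theorem of calculus in the $i$-th coordinate gives $|u(x)|\le\int_{-\infty}^{\infty}|Du(\dots,y_{i},\dots)|\,dy_{i}$ for each $i=1,\dots,n$; multiplying these $n$ inequalities and taking $(n-1)$-th roots yields
\[
|u(x)|^{n/(n-1)}\le\prod_{i=1}^{n}\Bigl(\int_{-\infty}^{\infty}|Du|\,dy_{i}\Bigr)^{1/(n-1)}.
\]
I would then integrate this bound successively over $x_{1},x_{2},\dots,x_{n}$, at each stage pulling out the one factor that does not depend on the current variable and applying the generalized H\"older inequality (with the remaining $n-1$ factors and all exponents equal to $n-1$) to the rest; after the $n$-th integration one is left with $\int_{\mathbb{R}^{n}}|u|^{n/(n-1)}\le\bigl(\int_{\mathbb{R}^{n}}|Du|\bigr)^{n/(n-1)}$, which is the desired inequality for $p=1$ with constant $1$, since $n/(n-1)=1^{*}$.

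For the range $1<p<n$ I would apply the $p=1$ estimate to $v:=|u|^{\gamma}$, where $\gamma>1$ is chosen below; since $t\mapsto|t|^{\gamma}$ is $C^{1}$ for $\gamma>1$ we have $v\in C^{1}_{c}(\mathbb{R}^{n})$ with $|Dv|=\gamma|u|^{\gamma-1}|Du|$. Picking $\gamma=\frac{p(n-1)}{n-p}$ makes $\gamma\cdot\frac{n}{n-1}=p^{*}=\frac{np}{n-p}$, so the left-hand side becomes a power of $\int|u|^{p^{*}}$; on the right-hand side, H\"older's inequality with exponents $\frac{p}{p-1}$ and $p$ turns $\int|u|^{\gamma-1}|Du|$ into $\bigl(\int|u|^{(\gamma-1)p/(p-1)}\bigr)^{(p-1)/p}\|Du\|_{L^{p}}$, and the same choice of $\gamma$ makes $(\gamma-1)\frac{p}{p-1}=p^{*}$ as well. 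Collecting the powers of $\int|u|^{p^{*}}$ on the left --- the net exponent is $\frac{n-p}{p(n-1)}$ --- and then raising to the power $\frac{n-1}{n}$ gives $\|u\|_{L^{p^{*}}}\le C(n,p)\|Du\|_{L^{p}}$.

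I do not expect a genuine obstacle here; the delicate part is the exponent bookkeeping in the second step --- checking that the single value $\gamma=\frac{p(n-1)}{n-p}$ simultaneously forces both $\gamma\cdot\frac{n}{n-1}$ and $(\gamma-1)\frac{p}{p-1}$ to equal $p^{*}$, and that the leftover exponent $\frac{n-p}{p(n-1)}$ on $\|u\|_{L^{p^{*}}}$ is strictly positive, which is precisely where the hypothesis $p<n$ enters and which makes the final division across legitimate (that division is allowed because $\|u\|_{L^{p^{*}}}<\infty$ for $u\in C^{1}_{c}$). In the first step the only place requiring care is the repeated application of the generalized H\"older inequality during the iterated integration, but this is routine.
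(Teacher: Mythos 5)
Your proposal is correct and follows essentially the same two-step argument as the paper: the $p=1$ case via the fundamental theorem of calculus in each coordinate, the product/root trick, and iterated generalized H\"older, followed by the bootstrap to $1<p<n$ by applying the $p=1$ estimate to $v=|u|^{\gamma}$ with $\gamma=\frac{p(n-1)}{n-p}$ and splitting $\int|u|^{\gamma-1}|Du|$ by H\"older with exponents $\frac{p}{p-1}$ and $p$. The exponent bookkeeping you describe (net exponent $\frac{n-p}{p(n-1)}$, then raising to the power $\frac{n-1}{n}$) matches the paper's computation exactly, and your explicit remarks on the smoothness of $|t|^{\gamma}$ and the legitimacy of dividing through are welcome clarifications of points the paper glosses over.
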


\begin{proposition}[Estimates for $W^{1,p},1 \leq p < n$.] \label{Sobolev embedding}
Let M be a bounded open subset of $\mathbb{R}^{n}$, suppose $\partial M$ is $C^1$. Assume $1 \leq p < n$ and $u \in W^{1,p}(M)$. Then $u\in L^{p^*}(M)$, with the estimate
\begin{align*}
    {\lVert u \rVert}_{L^{p^*}(M)} \leq C{\lVert u \rVert}_{W^{1,p}(M)}
\end{align*}
The constant $C$ depending only on $p$ and $n$,and $M$.
\end{proposition}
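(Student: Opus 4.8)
The plan is to reduce the statement on the bounded domain $M$ to the Gagliardo--Nirenberg--Sobolev inequality on all of $\mathbb{R}^n$ (Theorem \ref{Gagliardo-Nirenberg-Sobolev inequality}), which we already have for functions in $C^1_c(\mathbb{R}^n)$. The bridge between the two is a bounded extension operator together with an approximation argument. First I would invoke the Sobolev extension theorem: since $M$ is bounded and $\partial M$ is $C^1$, there exist a bounded open set $V$ with $M \subset\subset V$ and a bounded linear operator $E\colon W^{1,p}(M) \to W^{1,p}(\mathbb{R}^n)$ such that $Eu = u$ a.e.\ on $M$, $\mathrm{supp}(Eu) \subset V$, and $\lVert Eu \rVert_{W^{1,p}(\mathbb{R}^n)} \le C\lVert u \rVert_{W^{1,p}(M)}$ with $C = C(p,n,M)$. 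Write $\bar u = Eu$.

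Next I would approximate and pass to the limit. By mollification choose $u_m \in C^\infty_c(\mathbb{R}^n)$ with $u_m \to \bar u$ in $W^{1,p}(\mathbb{R}^n)$, with supports contained in a fixed compact neighbourhood of $V$. Applying Theorem \ref{Gagliardo-Nirenberg-Sobolev inequality} to the differences gives $\lVert u_m - u_\ell \rVert_{L^{p^*}(\mathbb{R}^n)} \le C\lVert Du_m - Du_\ell \rVert_{L^p(\mathbb{R}^n)} \to 0$, so $\{u_m\}$ is Cauchy in $L^{p^*}(\mathbb{R}^n)$; since it already converges to $\bar u$ in $L^p$, the $L^{p^*}$ limit is $\bar u$, and in particular $\bar u \in L^{p^*}(\mathbb{R}^n)$. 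Passing to the limit in $\lVert u_m \rVert_{L^{p^*}(\mathbb{R}^n)} \le C\lVert Du_m \rVert_{L^p(\mathbb{R}^n)}$ and then restricting to $M$ yields
\[
\lVert u \rVert_{L^{p^*}(M)} \le \lVert \bar u \rVert_{L^{p^*}(\mathbb{R}^n)} \le C\lVert D\bar u \rVert_{L^p(\mathbb{R}^n)} \le C\lVert \bar u \rVert_{W^{1,p}(\mathbb{R}^n)} \le C\lVert u \rVert_{W^{1,p}(M)},
\]
which is the assertion.

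The hard part is the extension operator, and I would build it in the standard way. Cover $\partial M$ by finitely many balls on which, after a $C^1$ change of coordinates, $M$ locally becomes a half-space $\{x_n > 0\}$; extend a $W^{1,p}$ function on the upper half-ball to the lower half by reflection across $\{x_n = 0\}$ (for the first-order space a simple even reflection $u(x',-x_n)$ keeps the weak gradient in $L^p$); transfer back through the charts, which costs only constants depending on the $C^1$ norms of the charts; and glue the local extensions, together with one interior patch, using a partition of unity subordinate to the cover. One then checks the glued function lies in $W^{1,p}(\mathbb{R}^n)$ with the claimed norm bound and, after multiplying by a fixed cutoff, has support in a bounded $V$; the remaining estimates are routine applications of H\"older's inequality and the product rule for weak derivatives. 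Everything else in the proof is soft functional analysis (density of mollifications, completeness of $L^{p^*}$), so the geometry of $\partial M$ enters only through this construction.
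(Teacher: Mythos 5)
Your proposal is correct: the chain extension operator $\to$ mollification $\to$ Gagliardo--Nirenberg--Sobolev on $C^1_c(\mathbb{R}^n)$ $\to$ limit passage is exactly the standard argument, and each step (including the even reflection sufficing for first-order Sobolev extension across a flattened $C^1$ boundary) holds as you describe. The paper itself states this proposition without proof, deferring to Chapter 5 of Evans, and the proof given there is precisely the one you outline, so there is nothing to reconcile between the two.
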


\begin{theorem}[Sobolev embedding theorem for one derivative] 
Let $1 \leq p \leq q \leq \infty$ be such that $\frac{n}{p}-1 \leq \frac{n}{q}$. Then $W^{1,p}(M)$ embeds continuously into $L^q(M)$.
\end{theorem}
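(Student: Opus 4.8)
The plan is to deduce the statement from the Gagliardo--Nirenberg--Sobolev inequality in its bounded-domain form (Proposition \ref{Sobolev embedding}), together with the elementary fact that, since $M$ is bounded, $L^r(M)\hookrightarrow L^s(M)$ whenever $1\le s\le r\le\infty$, with embedding constant $|M|^{1/s-1/r}$ coming from H\"older's inequality. The key algebraic observation is that when $1\le p<n$ one has $\tfrac{n}{p^*}=\tfrac{n-p}{p}=\tfrac{n}{p}-1$, so the hypothesis $\tfrac{n}{p}-1\le\tfrac{n}{q}$ is precisely the statement $q\le p^*$. Thus the only substantive case is $1\le p<n$, and moreover, since $p<p^*$, the admissible range is exactly $p\le q\le p^*$.

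In that case, let $u\in W^{1,p}(M)$. Proposition \ref{Sobolev embedding} gives $u\in L^{p^*}(M)$ with $\|u\|_{L^{p^*}(M)}\le C\|u\|_{W^{1,p}(M)}$ for $C=C(p,n,M)$. Since $q\le p^*$ and $|M|<\infty$, composing this with the inclusion $L^{p^*}(M)\hookrightarrow L^{q}(M)$ yields
\[
\|u\|_{L^{q}(M)}\;\le\;|M|^{1/q-1/p^*}\,\|u\|_{L^{p^*}(M)}\;\le\;C'\,\|u\|_{W^{1,p}(M)},
\]
which is exactly the asserted continuous embedding.

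For $p\ge n$ the hypothesis imposes nothing on $q$ beyond $q\ge p$ (with the understanding that $q<\infty$ when $p=n$, the pair $p=n$, $q=\infty$ being the classical exceptional case). When $p=n$, fix a finite $q\ge n$; because $M$ is bounded we have $W^{1,n}(M)\hookrightarrow W^{1,p'}(M)$ for every $p'<n$, and $(p')^*=\tfrac{np'}{n-p'}\to\infty$ as $p'\uparrow n$, so choosing a single $p'<n$ with $(p')^*\ge q$ reduces the problem to the previous paragraph and gives $\|u\|_{L^{q}(M)}\le C(q,n,M)\,\|u\|_{W^{1,n}(M)}$. When $n<p\le\infty$, Morrey's inequality (see \cite{evans}) gives $W^{1,p}(M)\hookrightarrow C^{0,1-n/p}(\overline{M})\hookrightarrow L^{\infty}(M)$, and then $L^{\infty}(M)\hookrightarrow L^{q}(M)$ since $M$ is bounded; the case $p=\infty$ is immediate.

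The routine content is the H\"older and inclusion estimates. The only genuinely delicate point is the borderline exponent $p=n$: here the embedding into $L^{\infty}$ really does fail, so the limiting argument must be organized by fixing the target exponent $q$ first and only afterwards selecting one admissible $p'<n$, so that the constant stays finite and depends only on $q$, $n$, and $M$. I expect this bookkeeping at the critical exponent, rather than any deep estimate, to be the main thing to get right.
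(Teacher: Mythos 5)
The paper states this theorem in its appendix without any proof---it is quoted as standard background from \cite{evans}---so there is no in-paper argument to compare yours against. Your derivation is correct and is the standard one: the identity $\tfrac{n}{p^*}=\tfrac{n}{p}-1$ correctly translates the hypothesis into $q\le p^*$ for $p<n$, after which Proposition \ref{Sobolev embedding} plus the H\"older inclusion $L^{p^*}(M)\hookrightarrow L^q(M)$ on the bounded set $M$ finishes that case; the reductions for $p=n$ (via $W^{1,n}\hookrightarrow W^{1,p'}$ for a $p'<n$ chosen after fixing $q$) and for $p>n$ (via Morrey) are likewise sound. One point you raise deserves emphasis rather than a parenthesis: as literally stated, the theorem admits the pair $p=n$, $q=\infty$, since $\tfrac{n}{n}-1=0\le\tfrac{n}{\infty}$, and the embedding $W^{1,n}(M)\hookrightarrow L^\infty(M)$ genuinely fails for $n\ge 2$ (e.g.\ $u(x)=\log\log(1+1/|x|)$ near the origin). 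So the hypothesis should exclude that endpoint (or be stated with a strict inequality when $q=\infty$); your proof correctly covers every case in which the conclusion actually holds.
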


\begin{theorem}[Rellich--Kondrachov Compactness Theorem] \label{Rellich--Kondrachov Compactness Theorem}
Assume $U$ is a bounded open subset of $\mathbb{R}^n,$ and $\partial M$ is $C^1$. Let $1 \leq p \leq q \leq \infty$ be such that $\frac{n}{p}-1 < \frac{n}{q}$. Then 
$$
W^{1,p}(M) \subset\subset L^q(M).
$$
\end{theorem}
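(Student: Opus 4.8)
The plan is to follow the mollification-and-interpolation argument of \cite{evans}. Since $M$ is bounded with $C^1$ boundary, I would first reduce to working on all of $\mathbb{R}^n$: fix a bounded open set $V$ with $M \subset\subset V$ and invoke the standard extension operator $E : W^{1,p}(M) \to W^{1,p}(\mathbb{R}^n)$ (Chapter 5 of \cite{evans}), which is bounded and whose outputs are supported in $V$. Then, given a sequence $\{u_m\} \subset W^{1,p}(M)$ with $\|u_m\|_{W^{1,p}(M)} \le 1$, I would replace it by $\{Eu_m\}$, bounded in $W^{1,p}(\mathbb{R}^n)$ and supported in $V$; it suffices to extract a subsequence converging in $L^q(V)$.

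Next I would reduce to the case $1 \le p < n$ and $1 \le q < p^*$. The hypothesis $\tfrac{n}{p} - 1 < \tfrac{n}{q}$ means exactly $q < p^*$ when $p < n$, and imposes only $q < \infty$ when $p \ge n$. If $p = n$, then for finite $q$ pick $r < n$ with $r^* = \tfrac{nr}{n-r} > q$; since $M$ is bounded, $W^{1,n}(M) \hookrightarrow W^{1,r}(M)$, and the $p = r$ case gives compactness into $L^q(M)$. If $p > n$, then $W^{1,p}(M) \hookrightarrow C^{0,\gamma}(\overline M)$ with $\gamma = 1 - n/p$, which embeds compactly into $C(\overline M) \hookrightarrow L^q(M)$ by Arzelà–Ascoli. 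So assume henceforth $p < n$ and $q < p^*$.

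The core of the argument is a uniform mollifier estimate. Let $\eta_\varepsilon$ be the standard mollifier and set $u_m^\varepsilon := \eta_\varepsilon * Eu_m$. Writing $u_m^\varepsilon(x) - Eu_m(x) = \int_{B(0,1)} \eta(y)\int_0^1 \tfrac{d}{dt} Eu_m(x - \varepsilon t y)\,dt\,dy$ and integrating in $x$ over $V$ yields $\|u_m^\varepsilon - Eu_m\|_{L^1(V)} \le \varepsilon \|D(Eu_m)\|_{L^1(V)} \le C\varepsilon$, uniformly in $m$. Meanwhile $\{Eu_m\}$ is bounded in $L^{p^*}(V)$ by the Gagliardo–Nirenberg–Sobolev inequality (Theorem \ref{Gagliardo-Nirenberg-Sobolev inequality}), hence so is $\{u_m^\varepsilon\}$; interpolating the $L^q$ norm between $L^1$ and $L^{p^*}$ — write $\tfrac1q = \theta + \tfrac{1-\theta}{p^*}$ with $\theta \in (0,1]$, which is possible precisely because $q < p^*$ — gives
\[
\|u_m^\varepsilon - Eu_m\|_{L^q(V)} \le \|u_m^\varepsilon - Eu_m\|_{L^1(V)}^{\theta}\,\|u_m^\varepsilon - Eu_m\|_{L^{p^*}(V)}^{1-\theta} \le C\varepsilon^{\theta},
\]
again uniformly in $m$. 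For each fixed $\varepsilon$, the family $\{u_m^\varepsilon\}_m$ is bounded in $C^1(\overline V)$ (since $\|u_m^\varepsilon\|_{\infty} \le \|\eta_\varepsilon\|_\infty\|Eu_m\|_{L^1} \le C\varepsilon^{-n}$, and similarly for the gradient), so Arzelà–Ascoli makes it precompact in $C(\overline V)$, in particular in $L^q(V)$. Finally, a diagonal/Cauchy argument finishes: given $\delta > 0$, choose $\varepsilon$ with $C\varepsilon^\theta < \delta/3$; precompactness of $\{u_m^\varepsilon\}_m$ in $L^q(V)$ lets one find infinitely many indices whose mollifications lie within $\delta/3$ of each other, and the triangle inequality then bounds $\|Eu_m - Eu_{m'}\|_{L^q(V)} < \delta$ for those indices. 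Running this over $\delta = 1, \tfrac12, \tfrac13, \dots$ and extracting nested subsequences, the diagonal subsequence is Cauchy in $L^q(V)$, hence convergent, and restricts to a convergent sequence in $L^q(M)$.

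I expect the main obstacle to be organizational rather than analytic: correctly invoking the extension operator (the one ingredient not already in the excerpt), checking that the interpolation exponent $\theta$ lands in $(0,1]$ exactly under $q < p^*$, and stitching together the two limits (in $m$ and in $\varepsilon$) via the diagonal argument without circularity. The genuinely substantive inequality — the uniform $O(\varepsilon^\theta)$ bound on $\|u_m^\varepsilon - Eu_m\|_{L^q}$ — reduces to the elementary mollifier computation together with the already-proven Gagliardo–Nirenberg–Sobolev inequality (Theorem \ref{Gagliardo-Nirenberg-Sobolev inequality}) and Proposition \ref{Sobolev embedding}, so the hard content is inherited rather than produced here.
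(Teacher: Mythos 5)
The paper does not prove this theorem at all --- it is stated in the appendix and deferred to Chapter 5 of \cite{evans} --- and your proposal is precisely the standard Evans argument (extension, mollification, $L^1$--$L^{p^*}$ interpolation, Arzel\`a--Ascoli, diagonal extraction), with the edge cases $p\geq n$ handled correctly via $W^{1,n}\hookrightarrow W^{1,r}$ and Morrey embedding. The argument is correct as outlined; the only point a full write-up should make explicit is that the fundamental-theorem-of-calculus step in the mollifier estimate is first carried out for smooth functions and then extended to all of $W^{1,p}$ by density.
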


\section{Analytic preliminaries}
\begin{theorem}[Weak removable singularities theorem]\label{REMOVABLE}
Let $U$ be an open set in $M$ and $P\in U$. Suppose $u$ is a weak solution of $(\Delta + h)u = 0$ in
$U - {P}$, with $h \in L^{n/2}(U)$ and $u \in L^q(U)$ for some $q > p/2 = n/(n - 2)$.Then $u$ satisfies $(\Delta + h)u = 0$ weakly on all of $U$.
\end{theorem}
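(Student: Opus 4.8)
The statement is local near $P$, so my plan is to prove it by testing the equation against cut-off functions that vanish on a shrinking ball around $P$ and then letting the ball collapse. First I would work in a coordinate chart identifying a neighbourhood of $P$ with a Euclidean ball, so that $P$ becomes the origin and $r=|x|$; in the distributional sense the hypothesis says
\begin{align*}
\int_U \big(u\,\Delta\phi + h\,u\,\phi\big)\,dV_g = 0 \qquad \text{for all } \phi\in C_c^\infty(U\setminus\{P\}),
\end{align*}
and the goal is to establish the same identity for every $\phi\in C_c^\infty(U)$. Before anything else I would check that the hypotheses make all the integrands genuinely integrable: since $q>n/(n-2)$ we have $\tfrac1q+\tfrac2n<1$, so Hölder with exponents $q$ and $n/2$ gives $h\,u\in L^1_{\mathrm{loc}}(U)$, and $u\in L^q_{\mathrm{loc}}\subset L^1_{\mathrm{loc}}(U)$; hence $u\,\Delta\phi$ and $h\,u\,\phi$ lie in $L^1(U)$ for any $\phi\in C_c^\infty(U)$.

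Next, fixing $\phi\in C_c^\infty(U)$, I would introduce cut-offs $\eta_\epsilon(x)=\zeta(r/\epsilon)$ for a fixed $\zeta\in C^\infty(\mathbb{R})$ with $\zeta\equiv0$ on $(-\infty,1]$ and $\zeta\equiv1$ on $[2,\infty)$, so that $\eta_\epsilon$ is smooth (it is constant near $r=0$), vanishes on $B_\epsilon(P)$, equals $1$ off $B_{2\epsilon}(P)$, and satisfies $|\nabla\eta_\epsilon|\le C\epsilon^{-1}$, $|\Delta\eta_\epsilon|\le C\epsilon^{-2}$. Since $\eta_\epsilon\phi\in C_c^\infty(U\setminus\{P\})$ it is an admissible test function, and after expanding $\Delta(\eta_\epsilon\phi)=\eta_\epsilon\Delta\phi+2\langle\nabla\eta_\epsilon,\nabla\phi\rangle+\phi\,\Delta\eta_\epsilon$ one arrives at
\begin{align*}
0 = \int_U u\,\eta_\epsilon\,\Delta\phi\,dV_g + \int_U h\,u\,\eta_\epsilon\,\phi\,dV_g + \int_{A_\epsilon} u\,\big(2\langle\nabla\eta_\epsilon,\nabla\phi\rangle+\phi\,\Delta\eta_\epsilon\big)\,dV_g,
\end{align*}
where $A_\epsilon=\{\,\epsilon<r<2\epsilon\,\}$. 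The first two integrals converge to $\int_U u\,\Delta\phi\,dV_g$ and $\int_U h\,u\,\phi\,dV_g$ by dominated convergence, since $\eta_\epsilon\to1$ boundedly and pointwise a.e.\ and the integrability above is in force.

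The real work — and the only place the hypothesis $q>n/(n-2)$ enters — is showing that the annular term vanishes. Using the bounds on $\nabla\eta_\epsilon,\Delta\eta_\epsilon$, the boundedness of $\phi$ and $\nabla\phi$, and Hölder on $A_\epsilon$ with exponents $q$ and $q/(q-1)$, I would estimate
\begin{align*}
\left|\int_{A_\epsilon} u\,\big(2\langle\nabla\eta_\epsilon,\nabla\phi\rangle+\phi\,\Delta\eta_\epsilon\big)\,dV_g\right|
\le C\,\epsilon^{-2}\!\int_{A_\epsilon}\!|u|\,dV_g
\le C\,\epsilon^{-2}\,\|u\|_{L^q(A_\epsilon)}\,|A_\epsilon|^{\,1-1/q}
\le C\,\|u\|_{L^q(U)}\,\epsilon^{\,n(1-1/q)-2},
\end{align*}
and the exponent $n(1-1/q)-2$ is strictly positive precisely because $q>n/(n-2)$, so the annular term tends to $0$ (one can also invoke absolute continuity of the integral, $\|u\|_{L^q(A_\epsilon)}\to0$, to absorb the borderline case). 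Letting $\epsilon\to0$ then gives $\int_U(u\,\Delta\phi+h\,u\,\phi)\,dV_g=0$ for all $\phi\in C_c^\infty(U)$, which is exactly the assertion that $(\Delta+h)u=0$ weakly on all of $U$. I expect this last scaling estimate to be the main obstacle: everything else is bookkeeping, but it is here that the precise integrability threshold $q>n/(n-2)$ is indispensable — intuitively, at this scale a single point is too small for $u$ to detect, and a weaker exponent would leave a non-vanishing residue concentrated at $P$.
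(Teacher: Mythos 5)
Your argument is correct and is essentially the standard proof of this result: the paper itself gives no proof, deferring to Proposition 2.7 of the survey \cite{Lee}, and the proof there is exactly your cut-off argument --- test against $\eta_\epsilon\phi$ with $\eta_\epsilon$ vanishing on $B_\epsilon(P)$, expand $\Delta(\eta_\epsilon\phi)$, and kill the annular error by H\"older, with the exponent $n(1-1/q)-2>0$ being precisely the condition $q>n/(n-2)$. The only point worth making explicit is the convention that ``weak solution'' here means the distributional formulation $\int_U (u\,\Delta\phi + h u \phi)\,dV_g=0$ (which is the right reading given that only $u\in L^q$ is assumed), and you have stated and used exactly that.
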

For the proof of this theorem, check Proposition 2.7 in the survey \cite{Lee}.

\begin{proposition}[Yamabe]
\label{yamabe propostion}
For $2 \leq s \leq p,$ there exists a smooth, positive solution $\phi_s$ to the subcritical equation, for which $Q^s(\phi_s) = \lambda_s,$ and $\vert \phi_s \vert_s = 1.$
\end{proposition}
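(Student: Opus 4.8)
The plan is to run the direct method of the calculus of variations, which succeeds here precisely because $s$ lies \emph{strictly} below the critical exponent $p = 2^*$, so the lack of compactness responsible for the difficulty in the critical case is absent. I will prove the statement for $2 \le s < p$, the range actually used later; at the endpoint $s = p$ the assertion coincides with the Yamabe problem itself and needs the full argument of Theorem \ref{Yamabe, Trudinger, Aubin}. Write $\mathcal{E}(\phi) = \int_M\big(c_n\langle\nabla\phi,\nabla\phi\rangle + R\phi^2\big)\,dVol_g$, so that $Q^s(\phi) = \mathcal{E}(\phi)/|\phi|_s^2$ and $\lambda_s = \inf\{Q^s(\phi) : \phi \in W^{1,2}(M),\ \phi \not\equiv 0\}$, and let $\{\phi_k\}$ be a minimizing sequence normalized by $|\phi_k|_s = 1$. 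Since $|\nabla|\phi_k|| = |\nabla\phi_k|$ a.e., replacing $\phi_k$ by $|\phi_k|$ changes neither $\mathcal{E}(\phi_k)$ nor $|\phi_k|_s$, so I may assume $\phi_k \ge 0$. First I would record coercivity: because $M$ is compact and $s \ge 2$, Hölder's inequality gives $|\phi_k|_2 \le \mathrm{vol}(M)^{1/2 - 1/s}|\phi_k|_s = \mathrm{vol}(M)^{1/2 - 1/s}$, hence $\big|\int_M R\phi_k^2\,dVol_g\big| \le \sup_M|R|\,\mathrm{vol}(M)^{1 - 2/s}$; this simultaneously shows $\lambda_s > -\infty$ and that $c_n\int_M|\nabla\phi_k|^2\,dVol_g = \mathcal{E}(\phi_k) - \int_M R\phi_k^2\,dVol_g$ is bounded, so $\{\phi_k\}$ is bounded in $W^{1,2}(M)$.

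The compactness step is where subcriticality first pays off. Since $s < 2^*$ we have $\tfrac{n}{2} - 1 < \tfrac{n}{s}$, so the Rellich--Kondrachov theorem (\ref{Rellich--Kondrachov Compactness Theorem}) makes $W^{1,2}(M) \hookrightarrow L^s(M)$ compact. Passing to a subsequence, $\phi_k \rightharpoonup \phi_s$ weakly in $W^{1,2}(M)$ and $\phi_k \to \phi_s$ strongly in $L^s(M)$, hence also strongly in $L^2(M)$. Strong $L^s$ convergence gives $|\phi_s|_s = \lim_k|\phi_k|_s = 1$, so $\phi_s \not\equiv 0$; strong $L^2$ convergence gives $\int_M R\phi_s^2\,dVol_g = \lim_k\int_M R\phi_k^2\,dVol_g$; and weak lower semicontinuity of the Dirichlet energy gives $\int_M|\nabla\phi_s|^2\,dVol_g \le \liminf_k\int_M|\nabla\phi_k|^2\,dVol_g$. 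Adding, $\mathcal{E}(\phi_s) \le \liminf_k\mathcal{E}(\phi_k) = \lambda_s$, while $Q^s(\phi_s) = \mathcal{E}(\phi_s)/|\phi_s|_s^2 = \mathcal{E}(\phi_s) \ge \lambda_s$ by definition of the infimum; hence $Q^s(\phi_s) = \lambda_s$ and $\phi_s$ is a nonnegative minimizer with $|\phi_s|_s = 1$.

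It remains to produce the Euler--Lagrange equation and to upgrade $\phi_s$ to a smooth positive function. Repeating the first-variation computation of Section 2 with $s$ in place of $2^*$, for every $\eta \in C_c^\infty(M)$ the identity $\frac{d}{d\epsilon}\big|_{\epsilon = 0}\,Q^s(\phi_s + \epsilon\eta) = 0$ together with $|\phi_s|_s = 1$ yields the weak equation
\begin{align*}
-c_n\Delta\phi_s + R\phi_s = \lambda_s\,\phi_s^{s-1}.
\end{align*}
Regarding the right-hand side as a datum, it lies in some $L^{r}(M)$ with $r > 1$ (for instance $r = 2^*/(s-1)$, using $\phi_s \in W^{1,2}(M) \hookrightarrow L^{2^*}(M)$), so $L^p$ elliptic estimates give $\phi_s \in W^{2,r}(M)$ and the Sobolev embedding (\ref{Sobolev embedding}) improves its integrability; iterating, and using $s < 2^*$ precisely to guarantee that each step strictly raises the exponent, one reaches $\phi_s \in L^\infty(M) \subset C^{0,\alpha}(M)$ after finitely many steps, whence $\phi_s \in C^{2,\alpha}(M)$ by Schauder theory (see \cite{Trudinger,evans}). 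Then the strong maximum principle applied to the linear equation $-c_n\Delta\phi_s + \big(R - \lambda_s\phi_s^{s-2}\big)\phi_s = 0$, with $\phi_s \ge 0$ and $\phi_s \not\equiv 0$, forces $\phi_s > 0$ everywhere; since $R$ is smooth and $t \mapsto t^{s-1}$ is smooth away from $0$, one last bootstrap gives $\phi_s \in C^\infty(M)$. I expect the regularity bootstrap to be the main technical obstacle, since it is the only place where the subcriticality $s < 2^*$ is genuinely indispensable; everything before it, once the compact embedding of Theorem \ref{Rellich--Kondrachov Compactness Theorem} is in hand, is the routine lower-semicontinuity argument of the direct method.
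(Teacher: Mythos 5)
Your argument is correct and is precisely the standard direct-method proof that the paper itself defers to by citation (the paper gives no proof of this proposition, pointing instead to Yamabe's paper and Lee's survey): coercivity from $s \ge 2$ and compactness of $M$, the compact embedding $W^{1,2}(M)\hookrightarrow L^s(M)$ for $s<2^*$ via Theorem \ref{Rellich--Kondrachov Compactness Theorem}, weak lower semicontinuity, the subcritical Euler--Lagrange equation, and the elliptic bootstrap plus maximum principle to get smoothness and positivity. Your restriction to $2\le s<p$ is also the right reading of the statement: the endpoint $s=p$ cannot be obtained this way (it is the Yamabe problem itself), and Proposition \ref{Ulenbeck} only ever invokes the case $s<p$.
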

For the proof of this proposition see the paper \cite{yamabe} and the survey \cite{Lee}.

\begin{theorem}[Radon-Nikodym] \label{Radon-Nikodym}
On the measurable space $(X,\Sigma)$, define two $\sigma-$finite measures, $\mu$ and $\nu$. It states that if $\nu \ll \Sigma$ (that is, if $\nu$ is absolutely continuous with respect to $\mu$, then there exists a $\sigma-$measurable function $f:X\to [0,\infty ),$ such that for any measurable set  $\{\Omega \subseteq X\},$
$$
\nu(\Omega) = \int_\Omega fd\nu. 
$$
\end{theorem}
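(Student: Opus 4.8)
The plan is to use von Neumann's Hilbert-space argument, which reduces the construction of the density to the Riesz representation theorem on $L^2$. (I prove the statement with the intended hypothesis $\nu \ll \mu$ and conclusion $\nu(\Omega) = \int_\Omega f\, d\mu$.)

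First I would reduce to the case where both $\mu$ and $\nu$ are finite. By $\sigma$-finiteness we may write $X = \bigsqcup_k X_k$ as a countable disjoint union of measurable sets with $\mu(X_k) + \nu(X_k) < \infty$; proving the theorem for the restricted measures on each $X_k$ and then summing the resulting densities produces a density on all of $X$, since a countable sum of nonnegative measurable functions is again measurable.

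Assume now that $\mu,\nu$ are finite and put $\varphi := \mu + \nu$. The functional $L(g) := \int_X g\, d\nu$ is linear on $L^2(X,\varphi)$, and by Cauchy--Schwarz $|L(g)| \leq \nu(X)^{1/2}\,\|g\|_{L^2(\varphi)}$ because $\nu \leq \varphi$; hence $L$ is bounded, and the Riesz representation theorem gives $h \in L^2(X,\varphi)$ with $\int_X g\, d\nu = \int_X g h\, d\varphi$ for every $g \in L^2(\varphi)$. Testing against $g = \mathbf{1}_{\{h<0\}}$ shows $h \geq 0$ $\varphi$-a.e., and testing against $g = \mathbf{1}_{\{h\geq 1\}}$ forces $\mu(\{h\geq 1\}) = 0$, whence $\nu(\{h\geq 1\}) = 0$ by absolute continuity; so after modification on a $\varphi$-null set we may assume $0 \leq h < 1$ everywhere. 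Rewriting the identity as $\int_X g(1-h)\, d\nu = \int_X g h\, d\mu$ and applying it with $g = (1 + h + \cdots + h^n)\mathbf{1}_\Omega$ for a measurable set $\Omega$ gives
\[
\int_\Omega (1 - h^{n+1})\, d\nu = \int_\Omega (h + h^2 + \cdots + h^{n+1})\, d\mu ,
\]
since $g$ is bounded and $\varphi$ is finite, so $g \in L^2(\varphi)$. Letting $n \to \infty$, both integrands increase monotonically, so by the monotone convergence theorem the left side tends to $\nu(\Omega)$ and the right side tends to $\int_\Omega f\, d\mu$ with $f := h/(1-h) \geq 0$. Taking $\Omega = X$ shows $f \in L^1(\mu)$, so $f$ is finite $\mu$-a.e., and $\nu(\Omega) = \int_\Omega f\, d\mu$ for all measurable $\Omega$, as required; uniqueness up to a $\mu$-null set follows by testing the difference of two densities against $\{f_1 > f_2\}$ and $\{f_1 < f_2\}$.

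The step I expect to be the crux is the passage to a version of $h$ with $h < 1$ everywhere: this is the one and only place where the hypothesis $\nu \ll \mu$ is used, and it is genuinely needed — dropping it yields only the Lebesgue decomposition $\nu = f\, d\mu + \nu_{\mathrm{sing}}$ rather than a pure density. The remaining ingredients (the finiteness reduction, boundedness of $L$, and the telescoping computation combined with monotone convergence) are routine.
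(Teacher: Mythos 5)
Your proof is correct. The paper itself offers no proof of this statement --- it is quoted in the appendix as standard background for the Radon--Nikodym step in the proof of Lemma \ref{Lions} --- so there is nothing to compare against; your von Neumann argument (Riesz representation on $L^2(\mu+\nu)$, the observation that $\mu(\{h\geq 1\})=0$ so absolute continuity kills $\{h\geq 1\}$, then the telescoping identity and monotone convergence to extract $f=h/(1-h)$) is the classical route and every step checks out, including the reduction to finite measures via a common refinement of the two $\sigma$-finite partitions. You were also right to silently repair the statement as printed: the hypothesis should read $\nu\ll\mu$ (not $\nu\ll\Sigma$) and the conclusion should be $\nu(\Omega)=\int_\Omega f\,d\mu$ (not $\int_\Omega f\,d\nu$); your proof establishes the corrected statement, which is the one actually used in the body of the paper. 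The only cosmetic quibble is that your uniqueness remark at the end should be localized to the finite pieces $X_k$ before testing against $\{f_1>f_2\}$, since on a general $\sigma$-finite space both integrals could be infinite; this does not affect the existence claim, which is all the theorem asserts.
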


\end{document}